\numberwithin{equation}{section}
\newtheorem{theorem}{Theorem}[section]
\newtheorem{proposition}[theorem]{Proposition}
\newtheorem{corollary}[theorem]{Corollary}
\theoremstyle{definition}
\newtheorem{definition}[theorem]{Definition}
\newtheorem{remark}[theorem]{Remark}
\numberwithin{equation}{section}
\newcommand{\st}{\;|\;}
\newcommand{\R}{\mathbb{R}}
\newcommand{\Z}{\mathbb{Z}}
\newcommand{\C}{\mathbb{C}}
\newcommand{\VV}{\mathbb{V}}
\newcommand{\M}{\mathcal{M}}
\newcommand{\N}{\mathcal{N}}
\newcommand{\cR}{\mathcal{R}}
\newcommand{\cS}{\mathcal{S}}
\newcommand{\PSL}{\mathrm{PSL}}
\newcommand{\Sp}{\mathrm{Sp}}
\newcommand{\U}{\mathrm{U}}
\newcommand{\GL}{\mathrm{GL}}
\newcommand{\SL}{\mathrm{SL}}
\newcommand{\SO}{\mathrm{SO}}
\newcommand{\cSp}{\mathrm{Sp}}
\DeclareMathOperator{\Jac}{Jac} 
\DeclareMathOperator{\Ad}{Ad} 
\DeclareMathOperator{\divisor}{div}
\DeclareMathOperator{\cSym}{Sym}
\DeclareMathOperator{\tr}{tr}
\DeclareMathOperator{\rk}{rk}
\DeclareMathOperator{\End}{End}
\newcommand{\liez}{\mathfrak{z}}
\newcommand{\liemc}{\mathfrak{m}^{\mathbb{C}}}
\newcommand{\lieh}{\mathfrak{h}}
\newcommand{\liehc}{\mathfrak{h}^{\mathbb{C}}}
\newcommand{\liegc}{\mathfrak{g}^{\mathbb{C}}}
\begin{document}
\thispagestyle{empty}




\markboth{Andr\'e Oliveira}{Quadric bundles applied to non-maximal Higgs bundles}
$ $
\bigskip

\bigskip

\centerline{{\LARGE  Quadric bundles applied to non-maximal Higgs bundles}}

\bigskip
\bigskip
\centerline{{\large  Andr\'e Oliveira}}

\vspace*{.7cm}

\begin{abstract}
We present a survey on the moduli spaces of rank $2$ quadric bundles over a compact Riemann surface $X$. These are objects which generalise orthogonal bundles and which naturally occur through the study of the connected components of the moduli spaces of Higgs bundles over $X$ for the real symplectic group $\Sp(4,\R)$, with non-maximal Toledo invariant. Hence they are also related with the moduli space of representations of $\pi_1(X)$ in $\Sp(4,\R)$. We explain this motivation in some detail.
\end{abstract}

\pagestyle{myheadings}
\section{Components of Higgs bundles moduli spaces}\label{sec:1}

Higgs bundles over a compact Riemann surface $X$ were introduced by Nigel Hitchin in \cite{hitchin:1987} as a pair $(V,\varphi)$ where $V$ is a rank $2$ and degree $d$ holomorphic vector bundle on $X$, with fixed determinant, and $\varphi$ a section of $\End(V)\otimes K$ with trace zero. $K$ denotes the canonical line bundle of $X$ --- the cotangent bundle of $X$. Nowadays those are also known as $\SL(2,\C)$-Higgs bundles. In the same paper, Hitchin determined the Poincar\'e polynomial of the corresponding moduli space $\M_d(\SL(2,\C))$, for $d$ odd. The method was based on Morse-Bott theory, so smoothness of the moduli was an essential feature. It was then clear that $\M_d(\SL(2,\C))$ has an extremely rich topological structure, so a natural question was to ask about the topology of the moduli spaces of Higgs bundles for other groups. For $\SL(3,\C)$, this was achieved by P. Gothen in \cite{gothen:1994} and more recently, and using a new approach, O. Garc\'\i a-Prada, J. Heinloth and A. Schmitt in \cite{garcia-prada-heinloth-schmitt:2014,garcia-prada-heinloth:2013} obtained the same for $\SL(4,\C)$ and recursive formulas for $\SL(n,\C)$. Other recent developments were achieved in \cite{schiffmann:2016} on the study of $\M_d(\SL(n,\C))$, which seem to confirm some fascinating conjectures \cite{hausel-rodriguez-villegas:2008}. All these cases were done under the condition of coprimality between rank and degree, so that the moduli spaces are smooth. 

However, for a general real, connected, semisimple Lie group $G$, the moduli spaces $\M_c(G)$ of $G$-Higgs bundles with fixed topological type $c\in\pi_1(G)$, are non-smooth. This is one of the reasons why the topology of $\M_c(G)$ is basically unknown. Still, their most basic topological invariant --- the number of connected components --- is a honourable exception in this unknown territory, and much is known about it. If $G$ is compact then $\M_c(G)$ is non-empty and connected for any $c\in\pi_1(G)$ \cite{ramanathan:1975} and the same is true if $G$ is complex \cite{garcia-prada-oliveira:2014,li:1993}. In both cases the same holds even if $G$ is just reductive or even non-connected (the only difference is that for non-connected groups, the topological type is indexed not by $\pi_1(G)$, but by a different set \cite{oliveira:2011}). When $G$ is a real group, the situation can be drastically different. There are two cases where extra components are known to occur: when $G$ is a split real form of $G^\C$ and when $G$ is a non-compact group of hermitian type.

Suppose $G$ is a split real form of $G^\C$. Intuitively this means that $G$ is the ``maximally non-compact'' real form of $G^\C$; see for example \cite{onishchik:2004} for the precise definition. For instance $\SL(n,\R)$ and $\Sp(2n,\R)$  are split real forms of $\SL(n,\C)$ and of $\Sp(2n,\C)$, respectively. For these groups, Hitchin proved in \cite{hitchin:1992} that there always exists at least one topological type $c$ for which $\M_c(G)$ is disconnected and that the ``extra'' component is contractible and indeed isomorphic to a vector space --- this is the celebrated \emph{Hitchin component} also known as \emph{Teichm\"uller component}. We will not pursue in this direction here.

A non-compact semisimple Lie group $G$ of \emph{hermitian type} is characterised by the fact that $G/H$ is a hermitian symmetric space, where $H\subset G$ is a maximal compact subgroup. Thus $G/H$ admits a complex structure compatible with the Riemannian structure, making it a K\"ahler manifold. If $G/H$ is irreducible, the centre of the Lie algebra of $H$ is one-dimensional and this implies that the torsion-free part of $\pi_1(G)=\pi_1(H)$ is isomorphic to $\Z$, hence the topological type gives rise to an integer $d$ (usually the degree of some vector bundle), called the \emph{Toledo invariant}. This Toledo invariant is subject to a bound condition, called the \emph{Milnor-Wood inequality}, beyond which the moduli spaces $\M_d(G)$ are empty. Moreover, when $|d|$ is maximal (and $G$ is of tube type \cite{bradlow-garcia-prada-gothen:2005}) there is a so-called \emph{Cayler partner phenomena} which implies the existence of extra components for $\M_d(G)$. This has been studied for many classes of hermitian type groups \cite{bradlow-garcia-prada-gothen:2005} and proved in an intrinsic and general way recently in \cite{biquard-garcia-prada-rubio:2015}.

On the other hand, the connected components of $\M_d(G)$ for non-maximal and non-zero Toledo invariant are not known in general. One exception is the case of $\U(p,q)$, which has been basically dealt in \cite{bradlow-garcia-prada-gothen:2003,bradlow-garcia-prada-gothen:2004 triples}. Two other exceptions are the cases of $G=\Sp(4,\R)$ and of $G=\SO_0(2,3)$ --- the identity component of $\SO(2,3)$. In these two cases, it is known \cite{garcia-prada-mundet:2004,gothen-oliveira:2012} that all the non-maximal subspaces are connected for each fixed topological type. Note that in the case of $\SO_0(2,3)$, the topological type is given by an element $(d,w)\in\Z\times\Z/2=\pi_1(\SO_0(2,3))$, with $d$ being the Toledo invariant; so for each $d$ there are two components, labeled by $w$. We expect that the same holds true in general, that is, $\M_d(G)$ is connected for non-maximal $d$ and fixed topological type.

In this paper we give an overview of the proof given in \cite{gothen-oliveira:2012} of the connectedness of $\M_d(\Sp(4,\R))$ and $\M_d(\SO_0(2,3))$ for non-maximal and non-zero $d$. In this study one is naturally lead to consider a certain type of pairs, which we call \emph{quadric bundles}, and the corresponding moduli spaces, depending on a real parameter $\alpha$. Denote them by $\N_\alpha(d)$. The relevant parameter for the study of $\M_d(\Sp(4,\R))$ and $\M_d(\SO_0(2,3))$ is $\alpha=0$. The idea is to obtain a description of the connected components of $\N_{\alpha_m^-}(d)$, for a specific value $\alpha_m^-$ of the parameter $\alpha$, and then vary $\alpha$, analysing the wall-crossing in the spirit of \cite{thaddeus:1994,bradlow-garcia-prada-gothen:2004 triples}. It turns out that a crucial step in that proof (namely in the description of of $\N_{\alpha_m^-}(d)$) is a detailed analysis of the Hitchin fibration for $L$-twisted $\SL(2,\C)$-Higgs bundles, taking into account \emph{all} the fibres of the Hitchin map and not only the generic ones. This was done in \cite{gothen-oliveira:2013}, and we briefly describe this analysis.

In the last section of the paper we briefly mention some other results concerning the spaces $\N_\alpha(d)$, obtained in \cite{oliveira:2015}, that lead to the description of some geometric and topological properties of these moduli spaces. In particular, these results imply that, under some conditions on $d$ and on the genus of $X$, a Torelli type theorem holds for $\N_\alpha(d)$.
\section{From Higgs bundles to quadric bundles}
\subsection{Definitions and examples}

Let $X$ be a compact Riemann surface of genus $g\geq 2$, with canonical line bundle $K=T^*X^{1,0}$, the holomorphic cotangent bundle. Let $G$ be a real semisimple, connected, Lie group. Fix a maximal compact subgroup $H\subseteq G$ with complexification $H^\C\subseteq G^\C$. If $\liehc\subseteq\liegc$ are the corresponding Lie algebras, then the Cartan decomposition
is $\liegc=\liehc\oplus\liemc$, where $\liemc$ is the vector space defined as the orthogonal complement of $\liehc$ with respect to the Killing form. Since $[\liemc,\liehc]\subset\liemc$, then $\liemc$ is a representation of $H^\C$ via the
\emph{isotropy representation} $H^\C\to\GL(\liemc)$
induced by the adjoint representation $\Ad:G^\C\to\GL(\liegc)$.  If $E_{H^\C}$ is a principal $H^{\C}$-bundle over $X$, denote by $E_{H^\C}(\liemc)=E_{H^\C}\times_{H^{\C}}\liemc$ the vector bundle associated to $E_{H^\C}$ via the isotropy representation.

\begin{definition}\label{definition of Higgs bundle}
  A \emph{$G$-Higgs bundle} over $X$ is a
  pair $(E_{H^\C},\varphi)$ where $E_{H^\C}$ is a principal
  holomorphic $H^\C$-bundle and $\varphi$ is a global
  holomorphic section of $E_{H^\C}(\liemc)\otimes K$, called the
  \emph{Higgs field}.
\end{definition}

In practice we usually replace the principal $H^\C$-bundle $E_{H^\C}$ by the corresponding vector bundle associated to some standard representation of $H^\C$ in some $\C^n$. Let us give two examples.

If $G=\SL(n,\C)$, then $H^\C=G$ gives rise to a rank $n$ vector bundle $V$ with trivial determinant and since $\liemc=\mathfrak{sl}(n,\C)$, the Higgs field $\varphi$ is a traceless $K$-twisted endomorphism of $V$. 
If we fix the determinant of $V$ to be any line bundle and impose the same traceless condition to $\varphi:V\to V\otimes K$, then we also call the pair $(V,\varphi)$ an $\SL(n,\C)$-Higgs bundle, although it is really a ``twisted'' $\SL(n,\C)$-Higgs bundle. All these are usually just called \emph{Higgs bundles with fixed determinant}. These are the ``original'' Higgs bundles, introduced in \cite{hitchin:1987}.

If $G=\Sp(2n,\R)$, we can take $H=\U(n)$ as a maximal compact subgroup. So $H^\C=\GL(n,\C)$ gives rise to a rank $n$ holomorphic vector bundle $V$.
The Cartan decomposition is $\mathfrak{sp}(2n,\C)=\mathfrak{gl}(n,\C)\oplus\liemc$ where
the inclusion $\mathfrak{gl}(n,\C)\hookrightarrow\mathfrak{sp}(2n,\C)$ is given by
$A\mapsto\mathrm{diag}(A,-A^T)$. So $\liemc=\left\{(B,C)\in\mathfrak{gl}(n,\C)^2\st B=B^T,\ C=C^T\right\}$. Hence we have that:

\begin{definition}
An $\Sp(2n,\R)$-Higgs bundle is a triple $(V,\beta,\gamma)$ where $V$ is a holomorphic rank $n$ vector bundle, $\beta\in H^0(X,S^2V\otimes K)$ and $\gamma\in H^0(X,S^2V^*\otimes K)$.
\end{definition}

In an $\Sp(2n,\R)$-Higgs bundle $(V,\beta,\gamma)$, we can then think of $\gamma$ as a map $\gamma:V\to V^*\otimes K$ such that $\gamma^t\otimes \mathrm{Id}_K=\gamma$ and likewise for $\beta:V^*\to V\otimes K$.

\vspace{.5cm}

A $G$-Higgs bundle $(E_{H^\C},\varphi)$ is topologically classified by
the topological invariant of the corresponding $H^\C$-bundle
$E_{H^\C}$, given by an element $\pi_1(H)\cong\pi_1(G)$.

In \cite{garcia-prada-gothen-mundet:2008}, a general notion of
(semi,poly)stability of $G$-Higgs bundles was developed, allowing for
proving a Hitchin--Kobayashi correspondence between polystable
$G$-Higgs bundles and solutions to certain gauge theoretic equations
known as \emph{Hitchin equations}. On the other hand, A. Schmitt introduced stability
conditions for more general objects, which also apply for the $G$-Higgs bundles context, and 
used these in his general construction of moduli spaces; cf. \cite{schmitt:2008}. In
particular his stability conditions coincide with the ones relevant for the Hitchin--Kobayashi
correspondence.  It should be noted that the stability conditions
depend on a parameter $\alpha \in \sqrt{-1}\lieh \cap \liez$, where
$\liez$ is the centre of $\liehc$. In most cases this parameter is fixed by the topological type, so it really does not play any relevant role. This happens for any compact or complex Lie group and most real groups. Indeed, the only case where the parameter is not fixed by the topology is when $G$ is of hermitian type. This is the case of $\Sp(2n,\R)$, so it is important for us to take into account the presence of $\alpha$. 

Denote by $\mathcal{M}^\alpha_d(G)$ the moduli space of $S$-equivalence classes of $\alpha$-semistable $G$-Higgs bundles with topological invariant $d \in \pi_1(G)$. On each $S$-equivalence class there is a unique (up to isomorphism) $\alpha$-polystable representative, so we can consider $\mathcal{M}^\alpha_d(G)$ as the moduli space isomorphism classes of $\alpha$-polystable $G$-Higgs bundles. 

\begin{remark}\label{rmk:Ltwisting}
Given any line bundle $L\to X$, of non-negative degree, everything we just said generalises to \emph{$L$-twisted $G$-Higgs pairs}. The only difference to $G$-Higgs bundles is that the Higgs field is a section of $E_{H^\C}(\liemc)\otimes L$ instead of $E_{H^\C}(\liemc)\otimes K$.
\end{remark}

\subsection{Higgs bundles for $\Sp(4,\R)$ and quadric bundles}

\subsubsection{Moduli of $\Sp(4,\R)$-Higgs bundles}

We already know that an $\Sp(4,\R)$-Higgs bundle is a triple $(V,\beta,\gamma)$ with $\rk(V)=2$ and \[\beta\in H^0(X,S^2V\otimes K),\quad \gamma\in H^0(X,S^2V^*\otimes K).\] The topological type is given by the degree of $V$: $d=\deg(V)\in\Z=\pi_1(\Sp(4,\R))$.
In fact, $\Sp(4,\R)$ is of hermitian type, and the invariant $d$ is the Toledo invariant mentioned in Section \ref{sec:1}.

Given a real parameter $\alpha$, here is the $\alpha$-(semi)stability condition for $\Sp(4,\R)$-Higgs bundles; see \cite{garcia-prada-gothen-mundet:2008,garcia-prada-gothen-mundet:2013} for the deduction of these conditions.

\begin{definition}\label{def:semistabSp(4,R)}
Let $(V,\beta,\gamma)$ be an $\Sp(4,\R)$-Higgs bundle with $\deg(V)=d$. It is $\alpha$-semistable if the following hold:
\begin{enumerate}
\item if $\beta=0$ then $d-2\alpha\geq 0$;
\item if $\gamma=0$ then $d-2\alpha\leq 0$.
\item for any line subbundle $L\subset V$, we have:
\begin{enumerate}
\item $\deg(L)\leq\alpha$ if $\gamma(L)=0$;
\item $\deg(L)\leq d/2$ if $\beta(L^\perp)\subset L\otimes K$ and $\gamma(L)\subset L^\perp\otimes K$;
\item $\deg(L)\leq d-\alpha$ if $\beta(L^\perp)=0$.
\end{enumerate}
\end{enumerate}
\end{definition}

Here $L^\perp$ stands for the kernel of the projection $V^*\to L^*$, so it is the annihilator of $L$ under $\gamma$; note that we are not considering any metric on $V$ whatsoever. 
As usual, there are also the notions of $\alpha$-stability (by considering strict inequalities) and of $\alpha$-polystability; cf. \cite{gothen-oliveira:2012}. 

\begin{remark}\label{rmk:stab of underlying bundle}
If we view a semistable rank two vector bundle $V$ of degree $d$ as an $\Sp(4,\R)$-Higgs bundle with $\beta=\gamma=0$, then it is $\alpha$-semistable if and only if $\alpha=d/2$.
\end{remark}

Our aim is to present an overview on the study of the connected components of the moduli space of $0$-polystable $\Sp(4,\R)$-Higgs bundles $\M_d(\Sp(4,\R))$ for certain values of $d$. To keep the notation simpler, we will just write
\[\M_d(\Sp(4,\R))=\M^0_d(\Sp(4,\R))\] for the case $\alpha=0$. In this case we will just say ``polystable'' instead of $0$-polystable and likewise for stable and semistable.

\begin{remark}\textbf{(Relation with representations $\pi_1(X)\to\Sp(4,\R)$)}\label{rmk:NonabelianHodge}
We consider $\alpha=0$ because this is the appropriate value for which non-abelian Hodge theory applies. More precisely, the \emph{non-abelian Hodge Theorem} for $\Sp(4,\R)$ states that an $\Sp(4,\R)$-Higgs bundle is polystable if and only if it corresponds to a reductive representation of $\pi_1(X)$ in $\Sp(4,\R)$. This implies that $\M_d(\Sp(4,\R))$ is homeomorphic to the space of reductive representations of $\pi_1(X)$ in $\Sp(4,\R)$, with topological invariant $d$, modulo the action of conjugation by $\Sp(4,\R)$, that is to $\cR_d(\Sp(4,\R))=\mathrm{Hom}^{\mathrm{red}}(\pi_1(X),\Sp(4,\R))/\Sp(4,\R)$. This theorem is in fact valid for any real semisimple Lie group and also for real reductive groups with some slight modifications. The proof in the classical $G=\SL(n,\C)$ case follows from \cite{corlette:1988,donaldson:1987,hitchin:1987,simpson:1992}. The more general case follows from \cite{corlette:1988,garcia-prada-gothen-mundet:2008}. See for instance \cite{garcia-prada-mundet:2004,garcia-prada-gothen-mundet:2013} for more information for the case of $\Sp(2n,\R)$ and \cite{bradlow-garcia-prada-gothen:2005} for an overview on the approach for the general group case.
\end{remark}

The \emph{Milnor-Wood inequality} for $G=\Sp(4,\R)$ states that if an $\Sp(4,\R)$-Higgs bundle of degree $d$ is semistable, then \cite{domic-toledo:1987,gothen:2001,garcia-prada-gothen-mundet:2008,biquard-garcia-prada-rubio:2015}\[|d|\leq 2g-2.\] (A similar type of inequality was proved for the first time for $G=\PSL(2,\R)$ by Milnor in \cite{milnor:1958}, on the representations side; cf. Remark \ref{rmk:NonabelianHodge}.) 

So $\M_d(\Sp(4,\R))$ is empty if $|d|>2g-2$. 
If $|d|=2g-2$ then we say that we are in the \emph{maximal Toledo} case, which is in fact the case where more interesting phenomena occur. Indeed, it is known \cite{gothen:2001} that $\M_{\pm(2g-2)}(\Sp(4,\R))$ has $3\times 2^{2g}+2g-4$ components and that it is isomorphic to the moduli space of $K^2$-twisted $\GL(2,\R)$-Higgs bundles --- this is an example of the Cayley partner phenomena mentioned in the introduction (see also \cite{bradlow-garcia-prada-gothen:2005,biquard-garcia-prada-rubio:2015}). In subsection \ref{subsect:maxToledo} below we will see this for a subvariety of $\M_{2g-2}(\Sp(4,\R))$. It is also known that $\M_0(\Sp(4,\R))$ is connected \cite{gothen:2001}. The corresponding results for these two extreme cases for $|d|$ in higher rank are also known; cf. \cite{garcia-prada-gothen-mundet:2013}.

Nevertheless, in this paper we are interested in the components of $\M_d(\Sp(4,\R))$ for \emph{non-maximal} and \emph{non-zero} Toledo invariant: $0<|d|<2g-2$.
The duality $(V,\beta,\gamma)\mapsto(V^*,\gamma,\beta)$ gives an isomorphism $\M_d(\Sp(4,\R))\cong\M_{-d}(\Sp(4,\R))$, thus we just consider $0<d<2g-2$.

\subsubsection{The approach to count components}

The general idea, introduced in \cite{hitchin:1987,hitchin:1992}, to study the connected components of $\M_c(G)$ is to use the functional $f:\M_c(G)\to\R$ mapping a $G$-Higgs bundle to the (square of the) $L^2$-norm of the Higgs field. The fact that $f$ is proper and bounded below implies that it attains a minimum on each connected component of $\M_c(G)$. Hence the number of connected components of $\M_c(G)$ is bounded above by the one of the subvariety $\N_c(G)\subset\M_c(G)$ of local minimum  of $f$. The procedure is thus to identify $\N_c(G)$, study its connected components and then draw conclusions about the components of $\M_c(G)$. Of course if $\N_c(G)$ turns out to be connected, then it immediately follows that $\M_c(G)$ is connected as well.

Explicitly, for $\Sp(4,\R)$, the Higgs field splits as $\beta$ and $\gamma$, so we have 
\begin{equation}\label{eq:Hitchin functional}
f(V,\beta,\gamma)=\|\beta\|_{L^2}^2+\|\gamma\|_{L^2}^2=\int_X\tr(\beta\beta^{*,h})+\int_X\tr(\gamma^{*,h}\gamma),
\end{equation} where $h:V\to\bar{V}^*$  is the metric on $V$ which provides the Hitchin-Kobayashi correspondence and hence we are taking in \eqref{eq:Hitchin functional} the adjoint with respect to $h$.

The following result completely identifies the subvariety of local minima in the non-zero and non-maximal cases. For this identification it is important that, over the smooth locus of $\M_d(\Sp(4,\R))$, the function $f$ is a moment map of the hamiltonian circle action $(V,\beta,\gamma)\mapsto(V,e^{i\theta}\beta,e^{i\theta}\gamma)$. By work of Frankel \cite{frankel:1959}, a smooth point of $\M_d(\Sp(4,\R))$ is a
critical point of $f$ exactly when it is a fixed point of this $\U(1)$-action. Then there is a cohomological criteria \cite[Corollary 4.15]{bradlow-garcia-prada-gothen:2003} which identifies the local minima among this fixed point set. Finally one has to perform a subsequent analysis to identify the local minima over the singular locus of $\M_d(\Sp(4,\R))$.

\begin{proposition}[\cite{gothen:2001}]\label{prop:localmin}
Let $(V,\beta,\gamma)$ represent a point of $\M_d(\Sp(4,\R))$, with $0<d<2g-2$. Then it is a local minimum of $f$ if and only if $\beta=0$.
\end{proposition}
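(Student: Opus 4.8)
The plan is to follow Hitchin's Morse-theoretic scheme, using the two criteria recalled just above. Since $f$ is proper and bounded below, a local minimum is in particular a critical point, so on the smooth locus of $\M_d(\Sp(4,\R))$ Frankel's theorem \cite{frankel:1959} identifies it with a fixed point of the circle action $(V,\beta,\gamma)\mapsto(V,e^{i\theta}\beta,e^{i\theta}\gamma)$. The first step is therefore to classify these fixed points. A fixed point carries a Hodge-type grading $V=\bigoplus_m V_m$ coming from the infinitesimal automorphism implementing the $\U(1)$-action, and $\beta,\gamma$ must be homogeneous of weight one for this grading. For $\rk V=2$ there are only two possibilities: either $V$ is isotypical (a single weight), in which case the implementing automorphism is scalar and can rescale at most one of $\beta,\gamma$ — forcing $\beta=0$ or $\gamma=0$ — or $V=L_1\oplus L_2$ splits into two distinct weight lines, with $\beta$ and $\gamma$ supported on the graded pieces compatible with the weights.

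The second step is to feed this list into the semistability conditions of Definition \ref{def:semistabSp(4,R)} with $\alpha=0$ and $0<d<2g-2$. Condition (2) shows that $\gamma=0$ cannot occur for $d>0$, and the purely bundle-theoretic fixed points $\beta=\gamma=0$ are excluded because a semistable bundle of degree $d\neq0$ is not $0$-semistable as a Higgs bundle. Hence every fixed point with $\beta=0$ automatically has $\gamma\neq0$, while every remaining fixed point has $\beta\neq0$, and then necessarily $\gamma\neq0$ with $V$ split. I would then apply the cohomological criterion \cite[Corollary 4.15]{bradlow-garcia-prada-gothen:2003}: at a fixed point the deformation complex
\[ C^\bullet:\ E_{H^\C}(\liehc)\xra{\ \ad\varphi\ }E_{H^\C}(\liemc)\otimes K \]
splits into $\U(1)$-weight summands $C^\bullet_k$, the Morse index of $f$ equals $\sum_{k<0}\dim\HH^1(C^\bullet_k)$, and the point is a local minimum precisely when these negative-weight hypercohomology groups vanish. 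For the $\beta=0$ locus a direct weight count should show that all negative-weight pieces vanish, so every such point is a local minimum; for a split fixed point with $\beta\neq0$ I would instead exhibit a nonzero class in some $\HH^1(C^\bullet_k)$ with $k<0$, giving positive index and hence no minimum.

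The decisive input in the last computation is the non-maximality $d<2g-2$: via Riemann--Roch it is exactly this bound that forces the relevant weight-$k$ cohomology to be non-zero, producing a genuine downward direction and ruling out a minimum whenever $\beta\neq0$. (At the maximal value $d=2g-2$ this cohomology collapses, which is why extra minima — the Cayley partners — appear there.) The main obstacle I anticipate is the passage to the singular locus: Frankel's theorem and the Hessian picture are valid where $f$ is honestly a moment map on a smooth space, whereas $\M_d(\Sp(4,\R))$ has strictly polystable points and points with extra automorphisms. There one must argue directly, either by analysing the associated graded of a polystable representative with $\beta\neq0$ and reducing to the smooth case, or by writing down an explicit first-order deformation that decreases $f$. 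Controlling these singular strata, rather than the weight bookkeeping itself, is where the real work lies.
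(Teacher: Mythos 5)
Your outline follows precisely the scheme the paper itself describes (and attributes to \cite{gothen:2001}): Frankel's theorem identifying critical points of $f$ with $\U(1)$-fixed points on the smooth locus, the cohomological minimality criterion of \cite[Corollary 4.15]{bradlow-garcia-prada-gothen:2003} applied to the weight decomposition of the deformation complex, and a separate analysis on the singular locus. Your classification of the rank-$2$ fixed points and your use of the $\alpha=0$ semistability conditions to rule out $\gamma=0$ and $\beta=\gamma=0$ when $0<d<2g-2$ are both correct, and your closing caveat about strictly polystable points is exactly the ``subsequent analysis'' the paper also defers.

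There is, however, a sign inconsistency which, executed literally, would yield the opposite conclusion. You normalise the grading so that $\beta$ and $\gamma$ have weight $+1$; with that normalisation the local-minimum criterion is the vanishing of $\HH^1(C^\bullet_k)$ for $k>0$, not $k<0$. To see that your version cannot be right, take a fixed point with $\beta=0$: then $V$ is isotypical of weight $-1/2$, so $\End(V)$ sits in weight $0$, $S^2V^*\otimes K$ in weight $+1$ (where $\gamma$ lives) and $S^2V\otimes K$ in weight $-1$. Hence $C^\bullet_{-2}$ is the complex $0\to S^2V\otimes K$, and $\HH^1(C^\bullet_{-2})=H^0(X,S^2V\otimes K)$, which is nonzero for every $d>0$ since its Euler characteristic is $3d+3g-3>0$; these classes are exactly the deformations switching $\beta$ back on, i.e.\ upward directions for $f$. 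So requiring the negative-weight $\HH^1$ to vanish would disqualify precisely the points the proposition asserts are minima. With the correct positive-weight criterion the $\beta=0$ case is immediate --- every $C^\bullet_k$ with $k>0$ is the zero complex, so no Riemann--Roch is needed there --- and the Riemann--Roch/non-maximality argument you sketch enters (with the corrected sign) only in showing that the split fixed points with $\beta\neq0$ carry a nonzero positive-weight class when $d<2g-2$. Once the sign is fixed, the rest of your plan matches the paper's (that is, Gothen's) treatment.
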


Thus, for $0<d< 2g-2$, the subvariety of local minima $\N_d(\Sp(4,\R))\subset\M_d(\Sp(4,\R))$ is given by pairs $(V,\gamma)$ where $V$ is a rank $2$ bundle, of degree $d$ and $\gamma$ is a section of $S^2V^*\otimes K$. This is what we call a \emph{quadric bundle}. Since $d$ is positive, $\gamma$ must indeed be a non-zero section, as we saw in Remark \ref{rmk:stab of underlying bundle}. 

\begin{definition}\label{definition of U quadric bundle}
  A \emph{quadric bundle} on $X$ is a pair $(V,\gamma)$, where $V$
  is a holomorphic vector bundle over $X$ and $\gamma$ is a holomorphic non-zero section of $S^2V^*\otimes K$.
\end{definition}

Quadric bundles are sometimes also called \emph{conic bundles} or \emph{quadratic pairs} in the literature. In particular, this happens in the papers \cite{gothen-oliveira:2012,oliveira:2015} by the author where they were named quadratic pairs. But the term ``quadric bundles'' used in \cite{giudice-pustetto:2014} is indeed more adequate, since it is more specific and moreover reveals the fact that these can be seen as bundles of quadrics, since for each $p\in X$ the map $\gamma$ restricted to the fibre $V_p$ defines a bilinear symmetric form, hence a quadric in $\mathbb{P}^{\rk(V)-1}$. When $\rk(V)=2$, the term conic bundle is then perfectly adequate also.

The rank and degree of a quadric bundle are of course the rank and degree of $V$. We will only consider the rank $2$ case. The rank $n$ case appears naturally by considering $\Sp(2n,\R)$-Higgs bundles.

\begin{remark}\label{rmk:U-quad}
More generally, one can define $U$-quadric bundles, for a fixed holomorphic line bundle $U$ over $X$. The only difference for the preceding definition is that $\gamma$ is a non-zero section of $S^2V^*\otimes U$. We will mostly be interested in ($K$-)quadric bundles, but more general $U$-quadric bundles will also appear, more precisely when $U=LK$, for some line bundle $L$, in relation with the group $\SO_0(2,3)$.
All results below can be adapted to this more general setting \cite{gothen-oliveira:2012}.
\end{remark}

Quadric bundles of rank up to $3$ were studied in \cite{gomez-sols:2000} by G\'omez and Sols, where they introduced an
appropriate $\alpha$-semistability condition, depending on a real parameter $\alpha$, and
constructed moduli spaces of $S$-equivalence classes of $\alpha$-semistable quadric bundles using GIT. The construction of the
moduli spaces follows from the general methods of \cite{schmitt:2008}. Denote the moduli space
of $S$-equivalence classes of $\alpha$-semistable $U$-quadric bundles
on $X$ of rank $2$ and degree $d$ by $\N_{X,\alpha}(2,d)=\N_\alpha(d)$.


A simplified $\delta$-(semi)stability condition for quadric bundles of arbitrary rank has been obtained in \cite{giudice-pustetto:2014}. 
In rank $2$ our $\alpha$-semistability condition reads as follows (see \cite[Proposition 2.15]{gothen-oliveira:2012}). It is equivalent to the corresponding one on \cite{giudice-pustetto:2014} by taking $\alpha=d/2-\delta$.
\begin{definition}\label{simplified notion of polystability}
Let $(V,\gamma)$ be a rank 2 quadric bundle of degree $d$.
\begin{itemize}
\item The pair $(V,\gamma)$ is \emph{$\alpha$-semistable} if and only if
  $\alpha\leq d/2$ and, for any line bundle $L\subset V$, the
  following conditions hold:
\begin{enumerate}
\item $\deg(L)\leq\alpha$, if $\gamma(L)=0$;
\item $\deg(L)\leq d/2$, if $\gamma(L)\subset L^\perp K$;
\item $\deg(L)\leq d-\alpha$, if $\gamma(L)\not\subset L^\perp K$.
\end{enumerate}
\item The pair $(V,\gamma)$ is \emph{$\alpha$-stable} if and only if it is
  $\alpha$-semistable for any line bundle $L\subset V$, the conditions
  $(1)$, $(2)$ and $(3)$ above hold with strict inequalities.
\end{itemize}
\end{definition}

Clearly these conditions are compatible with the ones of Definition \ref{def:semistabSp(4,R)}. There is also the notion of $\alpha$-polystability, but we omit it (see again Proposition 2.15 of \cite{gothen-oliveira:2012}). The important thing to note is that on each $S$-equivalence class of $\alpha$-semistable quadric bundles there is a unique $\alpha$-polystable representative. Thus the points of $\N_\alpha(d)$ parametrize the isomorphism classes of $\alpha$-polystable quadric bundles of rank $2$ and, furthermore, $\N_\alpha(d)$ is a subvariety of $\M_d^\alpha(\Sp(4,\R))$.

The next result follows from Proposition \ref{prop:localmin} and the discussion preceding it.

\begin{proposition}\label{prop:boundcomp}
Let $0<d<2g-2$. The number of connected components of the moduli space $\M_d(\Sp(4,\R))$ of semistable $\Sp(4,\R)$-Higgs bundles of degree $d$ is bounded above by the number of connected components of $\N_0(d)$, the moduli space of $0$-polystable quadric bundles of degree $d$.
\end{proposition}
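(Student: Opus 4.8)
The plan is to run the standard Morse-theoretic argument that was already sketched in the paragraph preceding the statement, making precise its two inputs: the analytic fact that the Hitchin functional $f$ of \eqref{eq:Hitchin functional} is proper and bounded below, and the geometric identification of its local minima furnished by Proposition \ref{prop:localmin}. Write $\N\subset\M_d(\Sp(4,\R))$ for the subvariety of local minima of $f$. The heart of the matter is to show that the inclusion $\N\hookrightarrow\M_d(\Sp(4,\R))$ induces a \emph{surjection} on sets of connected components; the inequality $|\pi_0(\M_d(\Sp(4,\R)))|\leq|\pi_0(\N)|$ is then immediate, after which I identify $\N$ with $\N_0(d)$.

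First I would establish the surjectivity on $\pi_0$. Since $\M_d(\Sp(4,\R))$ is a variety it is locally connected, so each connected component $C$ is both open and closed. Then $f|_C$ is again proper and bounded below, so a minimizing sequence for $m=\inf_C f$ is eventually trapped in the compact set $f^{-1}([m,m+1])$; extracting a convergent subsequence and using that $C$ is closed, $f|_C$ attains the value $m$ at some point $x\in C$. Because $C$ is open in $\M_d(\Sp(4,\R))$, this global minimum of $f|_C$ is a local minimum of $f$ on the whole moduli space, so $x\in\N$. Thus every connected component of $\M_d(\Sp(4,\R))$ meets $\N$, which is exactly the assertion that $\pi_0(\N)\to\pi_0(\M_d(\Sp(4,\R)))$ is onto, and the claimed bound follows.

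Next I would identify $\N$ with $\N_0(d)$. By Proposition \ref{prop:localmin}, since $0<d<2g-2$, a point of $\M_d(\Sp(4,\R))$ lies in $\N$ if and only if $\beta=0$; such points are precisely pairs $(V,\gamma)$ with $V$ of rank $2$ and degree $d$ and $0\neq\gamma\in H^0(X,S^2V^*\otimes K)$, that is, quadric bundles. It then remains to match the moduli structures, i.e.\ to check that $(V,0,\gamma)$ is $0$-polystable as an $\Sp(4,\R)$-Higgs bundle exactly when $(V,\gamma)$ is $0$-polystable as a quadric bundle. Setting $\beta=0$ and $\alpha=0$ in Definition \ref{def:semistabSp(4,R)}, conditions $(1)$ and $(2)$ hold automatically (as $d>0$ and $\gamma\neq0$), while the line-subbundle conditions (a)--(c) of item $(3)$ are easily seen to be equivalent to the three cases of Definition \ref{simplified notion of polystability} at $\alpha=0$: the clauses $\beta(L^\perp)\subset L\otimes K$ and $\beta(L^\perp)=0$ become vacuous, so the binding constraints are $\deg(L)\leq 0$ when $\gamma(L)=0$, $\deg(L)\leq d/2$ when $\gamma(L)\subset L^\perp K$, and $\deg(L)\leq d$ otherwise; the same comparison handles the polystability refinement. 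Hence $\N=\N_0(d)$ as subvarieties of $\M_d(\Sp(4,\R))$, and together with the surjectivity above this completes the proof.

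The genuinely substantive content is not the formal $\pi_0$-surjection but the two facts it consumes, and these are exactly where I would be most careful: that $f$ really is proper and bounded below (an analytic property of the $L^2$-norm of the Higgs field, cf.\ the discussion before the statement), and that Proposition \ref{prop:localmin} correctly captures the local minima over the \emph{singular} locus of $\M_d(\Sp(4,\R))$, where $f$ fails to be a moment map and the Frankel/circle-action criterion does not apply directly. Granting these inputs, the proposition is a formal consequence; the only actual computation is the routine verification, carried out above, that the $\beta=0$, $\alpha=0$ specialisation of the $\Sp(4,\R)$-stability conditions reproduces the $\alpha=0$ quadric-bundle conditions.
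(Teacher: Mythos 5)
Your proposal is correct and is essentially the paper's own argument: the paper proves this proposition precisely by combining the properness/boundedness of the Hitchin functional $f$ (which forces $f$ to attain a minimum on every connected component, so that $\pi_0$ of the locus of local minima surjects onto $\pi_0$ of the moduli space) with Proposition \ref{prop:localmin}, which identifies that locus for $0<d<2g-2$ as the quadric bundles $(V,\gamma)$, and with the compatibility of the $\alpha=0$, $\beta=0$ stability conditions already noted after Definition \ref{simplified notion of polystability}. Your explicit minimizing-sequence argument and the line-subbundle comparison are just careful write-ups of steps the paper leaves implicit.
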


\section{Moduli of quadric bundles and wall-crossing}

\subsection{Non-emptiness conditions}

The next result gives a Milnor-Wood type of inequality for quadric bundles. 
\begin{proposition}\label{empty above d_U}
 If $\N_\alpha(d)$ is non-empty then $2\alpha\leq d\leq 2g-2$.
\end{proposition}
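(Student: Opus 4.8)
The plan is to extract the left-hand inequality directly from the stability condition and to derive the right-hand inequality from the existence of the nonzero section $\gamma$, splitting according to whether $\gamma$ is generically nondegenerate. First, if $\N_\alpha(d)$ is non-empty then it contains an $\alpha$-semistable quadric bundle $(V,\gamma)$, and the very first clause of Definition \ref{simplified notion of polystability} requires $\alpha\leq d/2$, which is precisely $2\alpha\leq d$. So the only real content is the bound $d\leq 2g-2$.

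For the upper bound I would view $\gamma$ as a symmetric bundle map $\gamma\colon V\to V^*\otimes K$ and take its determinant. Since $V$ has rank $2$, $\det\gamma$ is a section of $(\det V)^{-2}\otimes K^2$, a line bundle of degree $4g-4-2d$. If $\det\gamma\neq 0$ this degree must be non-negative, giving $d\leq 2g-2$ immediately, with no recourse to stability. This disposes of the case where $\gamma$ is generically nondegenerate.

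The remaining, and genuinely substantive, case is $\det\gamma\equiv 0$, i.e.\ $\gamma$ has rank $1$ at the generic point. Here I would let $N\subset V$ be the line subbundle obtained by saturating the kernel of $\gamma\colon V\to V^*\otimes K$; since $V^*\otimes K$ is torsion-free, $\gamma(N)=0$ holds on the saturation, so clause $(1)$ of $\alpha$-semistability applies and yields $\deg N\leq\alpha\leq d/2$. Because $N$ is the radical of the symmetric form, $\gamma$ descends to a \emph{nonzero} section $\bar\gamma$ of $S^2(V/N)^*\otimes K=(V/N)^{-2}\otimes K$. As $V/N$ is a line bundle, non-vanishing of $\bar\gamma$ forces $\deg((V/N)^{-2}\otimes K)=2g-2-2\deg(V/N)\geq 0$, hence $\deg(V/N)\leq g-1$. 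Adding the two degree bounds, $d=\deg N+\deg(V/N)\leq d/2+(g-1)$, which rearranges to $d\leq 2g-2$.

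The step I expect to require the most care is the degenerate case: one must check that the saturated kernel $N$ really is $\gamma$-isotropic (so that clause $(1)$, rather than $(2)$ or $(3)$, is the applicable stability inequality) and that the induced form on the line bundle $V/N$ is honestly nonzero, so that its twist has a section. The generically nondegenerate case, by contrast, is essentially immediate from the determinant. I would also note that the argument is uniform in the twisting: replacing $K$ by a line bundle $U$ throughout gives the analogous bound $2\alpha\leq d\leq\deg U$, which is what is needed for the $U=LK$ variant relevant to $\SO_0(2,3)$.
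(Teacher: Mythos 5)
Your proposal is correct and takes essentially the same route as the paper: the inequality $2\alpha\leq d$ read off from the $\alpha$-semistability condition, the bound $d\leq 2g-2$ from the nonvanishing of $\det\gamma\in H^0(X,\Lambda^2V^{-2}K^2)$ when $\gamma$ is generically nondegenerate, and, in the generically rank-one case, the kernel line bundle $N$ together with clause $(1)$ of the stability definition. The only difference is cosmetic: in the rank-one case the paper bounds the saturated image $IK\subset V^*\otimes K$ by a second application of clause $(1)$ (to $I^\perp$, since $\gamma(I^\perp)=0$) and derives a contradiction, whereas you use the symmetry of $\gamma$ to descend it to a nonzero section of $(V/N)^{-2}K$ and conclude directly --- an equivalent, slightly streamlined version of the same degree computation.
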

\proof The first statement is immediate from $\alpha$-semistability, hence let us look to the second inequality.

Let $(V,\gamma)$ be quadric bundle of rank $2$ and degree $d$. If $\rk(\gamma)=2$ (generically), then $\det(\gamma)$ is a
non-zero section of $\Lambda^2V^{-2}K^2$ so $d\leq 2g-2$.

Suppose now that there exists an $\alpha$-semistable quadric bundle $(V,\gamma)$ of rank $2$ and degree
$d>2g-2$, with $\rk(\gamma)<2$. Since $\gamma\neq 0$, we must
have $\rk(\gamma)=1$.  Let $N$ be the line subbundle of $V$ given by the kernel of $\gamma$ and let
$I\subset V^*$ be such that $IK$ is the saturation of the image sheaf of $\gamma$. Hence $\gamma$ induces a non-zero map of line bundles
$V/N\to IK$, so
\begin{equation}\label{non-zero map}
 -d+\deg(N)+\deg(I)+2g-2\geq 0.
\end{equation} But, from the $\alpha$-semistability condition, we have $\deg(N)\leq\alpha$ and $\deg(I)\leq\alpha-d$, because $\gamma(I^\perp)=0$. This implies
$-d+\deg(N)+\deg(I)+2g-2<0$, contradicting \eqref{non-zero map}. We conclude that there is no such $(V,\gamma)$.
\endproof

In fact, the inequalities of this proposition are equivalent to the non-emptiness of the moduli. This follows from the results below.
So from now on we assume \[2\alpha\leq d\leq 2g-2.\] Indeed most of the times we will consider $2\alpha< d< 2g-2$.

\begin{remark}
Recall that our motivation comes from $\Sp(4,\R)$-Higgs bundles and there (see Proposition \ref{prop:localmin}) we imposed $d>0$. However, the moduli spaces of quadric bundles make perfect sense and can be non-empty also for $d\leq 0$. Hence we will \emph{not} impose $d>0$ for the quadric bundles moduli spaces, although when $d\leq 0$ we lose the relation with Higgs bundles.
\end{remark}

\subsection{Moduli for small parameter}

\subsubsection{Stabilization parameter}

For a fixed $d\leq 2g-2$, we know that there are no moduli spaces $\N_\alpha(d)$ whenever $\alpha$ is ``large'', meaning $\alpha >d/2$. Here we prove that there is a different kind of phenomena when $\alpha$ is ``small''. Precisely, we show that all the moduli spaces $\N_{\alpha'}(d)$ are isomorphic for any $\alpha'<d-g+1$. Moreover, in all of them, the map $\gamma$ is generically non-degenerate.
Write $\alpha_m=d-g+1$.

\begin{proposition}\label{injectivity-stabilization}
  If $(V,\gamma)$ is an $\alpha$-semistable pair with $\alpha<\alpha_m$, then $\gamma$ is generically non-degenerate. Moreover, if $\alpha_2\leq\alpha_1<\alpha_m$, then 
  $\N_{\alpha_1}(d)$ and $\N_{\alpha_2}(d)$ are isomorphic.
\end{proposition}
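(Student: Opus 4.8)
First I would establish generic non-degeneracy of $\gamma$ for $\alpha < \alpha_m = d-g+1$. Suppose for contradiction that $(V,\gamma)$ is $\alpha$-semistable but $\gamma$ is everywhere degenerate, i.e.\ $\rk(\gamma)\leq 1$ generically. Since $\gamma\neq 0$ we have $\rk(\gamma)=1$, so $\gamma$ has a kernel line subbundle $N\subset V$ with $\gamma(N)=0$. Condition $(1)$ of $\alpha$-semistability then forces $\deg(N)\leq\alpha$. On the other hand, because $\gamma$ drops rank along $N$, the quotient $V/N$ must be ``large'': running the degree bookkeeping used in the proof of Proposition \ref{empty above d_U}, the image of $\gamma$ lands in $N^\perp K$, and comparing $\deg(N)$ against $\deg(V/N)=d-\deg(N)$ via the induced nonzero map $V/N\to N^\perp K$ yields a lower bound on $\deg(N)$ of the form $\deg(N)\geq d-g+1=\alpha_m$. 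Combined with $\deg(N)\leq\alpha<\alpha_m$ this is a contradiction, so $\gamma$ is generically non-degenerate. The one point to be careful about here is saturating the kernel subsheaf to a genuine subbundle and correctly accounting for the torsion contribution, exactly as in the $I$ versus image-sheaf distinction in Proposition \ref{empty above d_U}.

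Second, and this is the heart of the statement, I would show that the $\alpha$-(semi)stability condition does not actually change as $\alpha$ varies within $[\alpha_2,\alpha_1]\subset(-\infty,\alpha_m)$, so that the moduli functors, and hence the moduli spaces, coincide. The key observation is that the three clauses of Definition \ref{simplified notion of polystability} involve $\alpha$ only through the bounds $\deg(L)\leq\alpha$ (when $\gamma(L)=0$) and $\deg(L)\leq d-\alpha$ (when $\gamma(L)\not\subset L^\perp K$); clause $(2)$ is $\alpha$-independent. Using the generic non-degeneracy just proved, I would argue that for $\alpha<\alpha_m$ clause $(1)$ can never be the binding constraint in a way that depends on $\alpha$: a line subbundle $L$ with $\gamma(L)=0$ is contained in the kernel of $\gamma$, which is now supported on a proper subscheme, forcing $\deg(L)$ to be bounded above by a quantity strictly less than $\alpha_m$, hence the inequality $\deg(L)\leq\alpha$ is automatically implied once $\alpha$ is taken anywhere in the relevant range. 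The upshot is that a pair is $\alpha_1$-semistable if and only if it is $\alpha_2$-semistable, and likewise for stability and polystability.

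The main obstacle, and what I would spend the most care on, is turning ``the stability conditions agree'' into a genuine isomorphism of moduli \emph{spaces} rather than merely a bijection on points. Since $\N_\alpha(d)$ is a GIT quotient (from \cite{gomez-sols:2000,schmitt:2008}), I would invoke that the moduli space represents (or corepresents) the moduli functor of families of $\alpha$-semistable quadric bundles, and then note that the family-version of the semistability condition is likewise constant across $[\alpha_2,\alpha_1]$ — the destabilizing-subbundle bounds are open/closed conditions uniform in the family — so the two functors are literally equal and their (co)representing spaces are canonically isomorphic. An alternative, if one prefers to stay closer to GIT, is to check that the relevant linearizations for $\alpha_1$ and $\alpha_2$ lie in the same chamber, i.e.\ that no wall is crossed in the interval, which is precisely the content of the pointwise statement above; the absence of strictly $\alpha$-dependent destabilizers means the set of (semi/poly)stable points is unchanged, so the GIT quotients are identified. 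Either route reduces the isomorphism to the constancy of the semistability condition, which is the substance established in the previous paragraph.
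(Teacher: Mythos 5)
Your first part is correct and is essentially the paper's own argument: by the symmetry of $\gamma$, the image sheaf lies in $N^\perp K$, and the induced non-zero map of line bundles $V/N\to N^\perp K$ forces $\deg(N)\geq d-g+1=\alpha_m$, contradicting clause (1) of Definition \ref{simplified notion of polystability}. (One small imprecision: once $\gamma$ is generically non-degenerate, a line subbundle $L$ with $\gamma(L)=0$ does not have ``bounded degree'' --- it cannot exist at all, since it would be contained in the kernel sheaf, which is torsion.)

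The genuine gap is in the second part. You correctly observe that $\alpha$ enters the semistability condition through clause (1), $\deg(L)\leq\alpha$ when $\gamma(L)=0$, and clause (3), $\deg(L)\leq d-\alpha$ when $\gamma(L)\not\subset L^\perp K$, but you then neutralize only clause (1) and declare the equivalence. That handles only one direction. When passing from $\alpha_1$-semistability to $\alpha_2$-semistability ($\alpha_2\leq\alpha_1$) it is clause (1) that tightens, and your vacuousness argument disposes of it. But in the converse direction, from $\alpha_2$-semistable to $\alpha_1$-semistable, it is clause (3) that tightens: the bound $d-\alpha$ decreases as $\alpha$ increases, so an $\alpha_2$-semistable pair could a priori have a line subbundle $L$ with $\gamma(L)\not\subset L^\perp K$ and $d-\alpha_1<\deg(L)\leq d-\alpha_2$, and nothing in your proposal rules this out; generic non-degeneracy of $\gamma$ by itself does not. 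The missing step (which is exactly how the paper argues, and parallels your part-one computation) is: $\gamma(L)\not\subset L^\perp K$ means precisely that the composite $L\to V\xrightarrow{\gamma} V^*\otimes K\to L^{-1}K$ is non-zero, hence $-2\deg(L)+2g-2\geq 0$, i.e. $\deg(L)\leq g-1=d-\alpha_m<d-\alpha_1$; so clause (3) is in fact automatic for every $\alpha\leq\alpha_m$, for any pair. With both $\alpha$-dependent clauses shown to be vacuous or automatic on the whole range $\alpha<\alpha_m$, the semistable (and stable, polystable) loci coincide, and your concluding functorial/GIT remark then correctly upgrades this pointwise agreement to an isomorphism of moduli spaces --- a point the paper itself leaves implicit.
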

\begin{proof} 
Recall that we always have $\gamma\neq 0$. If $\rk(\gamma)=1$,
considering again the line bundles $N=\ker(\gamma)\subset V$ and
$I\subset V^*$ as in the proof of Proposition \ref{empty above d_U}, we see that
$0\leq -d+\deg(N)+\deg(I)+2g-2\leq 2\alpha-2d+2g-2$, 
i.e., $\alpha\geq\alpha_m$. This settles the first part of the proposition.

Let $(V,\gamma)\in\N_{\alpha_1}(d)$. The only way that $(V,\gamma)$ may not belong to $\N_{\alpha_2}(d)$ is from the existence of an $\alpha_2$-destabilizing subbundle which, since $\alpha_2\leq\alpha_1$ and looking at Definition \ref{simplified notion of polystability}, must be a line subbundle $L\subset V$ such that $\gamma(L)=0$ and $\deg(L)>\alpha_2$. This in turn implies that $\rk(\gamma)=1$ generically, which is impossible due to the first part of the proposition.

Conversely, if $(V,\gamma)\in\N_{\alpha_2}(d)$, then $(V,\gamma)\in\N_{\alpha_1}(d)$ unless there is an $\alpha_1$-destabilizing subbundle $L$ of $(V,\gamma)$ such that $d-\alpha_1<\deg(L)\leq d-\alpha_2$, and $\gamma(L)\not\subset L^\perp K$. Therefore the composite 
$L\to V\xrightarrow{\gamma} V^*\otimes U\to L^{-1}K$ is non-zero so $-2\deg(L)+2g-2\geq 0$. On the other hand, $d-\alpha_1<\deg(L)$ together with $\alpha_1<\alpha_m$, gives
$-2\deg(L)+2g-2<0$, yielding again to a contradiction.
\end{proof}

We now aim to study the connectedness of the spaces $\N_\alpha(d)$, for $\alpha<\alpha_m$. Although our main motivation comes from the study of $\Sp(4,\R)$-Higgs bundles with non-maximal Toledo invariant (cf. Proposition \ref{prop:localmin}), let us say a few words about $\N_\alpha(2g-2)$, which really has a different behaviour from all the other cases. 

\subsubsection{Maximal Toledo invariant}\label{subsect:maxToledo}

Take $d=2g-2$. In this case $\alpha_m=g-1=d/2$, so the stabilisation parameter of the previous results is really the largest value for which non-emptiness holds. This means that, whenever non-empty, \emph{all the moduli spaces $\N_\alpha(2g-2)$ are isomorphic}, independently of $\alpha$. Accordingly, in this maximal case, we drop the $\alpha$ from the notation and just write $\N(2g-2)$.

The other special feature about this case is that if $(V,\gamma)\in\N(2g-2)$, then $\gamma:V\to V^*\otimes K$ is an isomorphism, since we already know that it must be injective and now the degrees match. By choosing a square
  root $K^{1/2}$ of $K$, $\gamma$ gives rise to a symmetric
  isomorphism $q=\gamma\otimes \mathrm{Id}_{K^{-1/2}}:V\otimes K^{-1/2}\cong V^*\otimes K^{1/2}$, i.e. to a
  nowhere degenerate quadratic form on $V\otimes
  K^{-1/2}$. In other words, $(V\otimes K^{-1/2},q)$ is an orthogonal vector bundle. 
Now, there is a semistability condition for orthogonal bundles (namely that any isotropic subbundle must have non-positive degree; \cite{ramanathan:1975}), and it can be seen that the orthogonal bundle $(V\otimes  K^{-1/2},q)$ is semistable if and only if
  $(V,\gamma)$ is $\alpha$-semistable for any $\alpha<\alpha_m$. So:
  
  \begin{proposition}
The moduli space $\N(2g-2)$ is isomorphic to the moduli space of rank $2$ orthogonal vector bundles (without fixed topological type).
  \end{proposition}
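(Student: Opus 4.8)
The plan is to promote the object-level construction sketched just above --- tensoring by a fixed theta characteristic --- into an isomorphism of the two moduli spaces. First I would fix a square root $K^{1/2}$ and set up the two mutually inverse operations
\[
(V,\gamma)\longmapsto (W,q):=\bigl(V\otimes K^{-1/2},\ \gamma\otimes\mathrm{Id}_{K^{-1/2}}\bigr),\qquad (W,q)\longmapsto\bigl(W\otimes K^{1/2},\ q\otimes\mathrm{Id}_{K^{1/2}}\bigr).
\]
Here $W$ has rank $2$ and degree $\deg(V)-2(g-1)=0$, and $q$ is a symmetric isomorphism $W\cong W^*$ precisely because $\gamma:V\to V^*\otimes K$ is an isomorphism (as recorded in the preamble, $\det\gamma$ is a degree-zero section that is generically non-zero, hence nowhere vanishing). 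Thus $(W,q)$ is a genuine rank $2$ orthogonal bundle and the construction is a bijection on isomorphism classes. Since $V$ is constrained only by $\deg(V)=2g-2$, the bundles $W$ range over \emph{all} topological types of rank $2$ degree $0$ orthogonal bundles, which is why the target is the moduli of orthogonal bundles with no fixed topological type.

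Next I would match the two stability conditions, which is where the hypothesis $\alpha<\alpha_m$ is used. Because $\gamma$ is an isomorphism, condition $(1)$ of Definition \ref{simplified notion of polystability} is vacuous. For condition $(3)$, if a line subbundle $L\subset V$ satisfies $\gamma(L)\not\subset L^\perp K$, then the composite $L\hookrightarrow V\xrightarrow{\gamma}V^*\otimes K\to L^{-1}K$ is non-zero, so $\deg(L)\leq g-1$; since $\alpha<\alpha_m=g-1$ forces $d-\alpha>g-1$, condition $(3)$ holds automatically. Hence $\alpha$-semistability of $(V,\gamma)$ reduces to condition $(2)$ alone, namely $\deg(L)\leq d/2$ for every $L$ with $\gamma(L)\subset L^\perp K$. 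Tensoring by $K^{-1/2}$ identifies line subbundles $L\subset V$ with line subbundles $L'=L\otimes K^{-1/2}\subset W$; under this identification $\gamma(L)\subset L^\perp K$ becomes exactly the isotropy condition $q(L')\subset (L')^\perp$, and $\deg(L)\leq d/2$ becomes $\deg(L')\leq 0$. These are precisely the defining inequalities of semistability for the orthogonal bundle $(W,q)$, so the bijection preserves semistability in both directions.

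Finally I would upgrade this bijection of semistable objects to an isomorphism of moduli spaces. The functor $\otimes K^{-1/2}$ is exact and preserves direct sums and the isotropic Jordan--Hölder data defining $S$-equivalence, so it sends $\alpha$-polystable representatives to polystable orthogonal bundles (in Ramanathan's sense) and induces a bijection of $S$-equivalence classes, i.e.\ of the points of $\N(2g-2)$ and of the orthogonal moduli. To see this is an isomorphism of varieties I would check algebraicity in families: tensoring a family $(\mathcal{V},\Gamma)$ over $X\times S$ by $p_X^*K^{-1/2}$ yields a family of orthogonal bundles over $S$, producing a morphism of moduli schemes whose inverse is built the same way. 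The main obstacle is this last step. The semistability computation above is clean and is the heart of the matter, but one must still verify that the two GIT moduli spaces co-represent functors intertwined by $\otimes K^{\pm1/2}$, and in particular handle the bookkeeping of strictly semistable classes --- matching the $\alpha$-polystable representative of a quadric bundle with an isotropic line of degree $d/2$ against the hyperbolic orthogonal bundle $L'\oplus(L')^{-1}$ with $\deg(L')=0$ --- which requires care at the level of families.
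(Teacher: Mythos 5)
Your proposal is correct and follows essentially the same route as the paper: the paper also fixes a square root $K^{1/2}$, uses that $\gamma$ is an isomorphism (injectivity from Proposition \ref{injectivity-stabilization} plus matching degrees) to produce the orthogonal bundle $(V\otimes K^{-1/2},\gamma\otimes\mathrm{Id}_{K^{-1/2}})$, and identifies $\alpha$-semistability for $\alpha<\alpha_m$ with Ramanathan's condition that isotropic subbundles have non-positive degree. In fact your write-up is more detailed than the paper's, which leaves both the stability comparison ("it can be seen that\dots") and the family-level upgrade to a moduli isomorphism implicit.
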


The existence of this isomorphism justifies the disconnectedness of $\N(2g-2)$. This is an example of the Cayley correspondence mentioned in the introduction. 
All this goes through higher rank, telling us that quadric bundles are the natural generalisation of orthogonal vector bundles, when we remove the non-degeneracy condition, providing another motivation for the consideration of these objects.

\subsubsection{Quadric bundles, twisted Higgs pairs and the fibres of the Hitchin map}\label{The Hitchin map}

Write $\alpha_m^-$ for any value of $\alpha$ less than $\alpha_m=d-g+1$. We shall now deal with the spaces $\N_{\alpha_m^-}(d)$ for any $d<2g-2$. We will do it by relating pairs $(V,\gamma)$ with certain twisted rank $2$ Higgs bundles and using the Hitchin map on the corresponding moduli space.

Consider a quadric bundle $(V,\gamma)\in\N_{\alpha_m^-}(d)$. By Proposition \ref{injectivity-stabilization}, $\det(\gamma)$ is a non-zero holomorphic section of $\Lambda^2V^{-2}K^2$. Since now $d<2g-2$, the section $\det(\gamma)$ has zeros, so we consider the corresponding effective divisor $\divisor(\det(\gamma))\in \cSym^{4g-4-2d}(X)$.

Write $\Jac^d(X)$ for the ``Jacobian variety'' of degree $d$ holomorphic line bundles over $X$. Let $\mathcal P_X$ be the $2^{2g}$-cover of $\cSym^{4g-4-2d}(X)$ obtained by pulling back the cover $\Jac^{2g-2-d}(X)\to \Jac^{4g-4-2d}(X)$, $L\mapsto L^2$, under the Abel-Jacobi map $\cSym^{4g-4-2d}(X)\to\Jac^{4g-4-2d}(X)$.
The elements of $\mathcal P_X$ are pairs $(D,L)$ in the product $\cSym^{4g-4-2d}(X)\times\Jac^{2g-2-d}(X)$ such that $\mathcal{O}(D)\cong L^2$.

In order to describe $\N_{\alpha_m^-}(d)$, we shall use the following map, which is analogue to the so-called Hitchin map defined by Hitchin in \cite{hitchin:1987}, and which will recall below in \eqref{eqHitchinmap}:
\begin{equation}\label{def of Hitchin map}
\begin{array}{rccl}
  h:&\N_{\alpha_m^-}(d) & \longrightarrow & \mathcal P_X\\
  &(V,\gamma) & \longmapsto & (\divisor(\det(\gamma)),\Lambda^2V^{-1}K).
\end{array}
\end{equation}
Our goal is to be able to say something about the fibres of this map.

To relate $h$ with the Hitchin map, recall first that, given any line bundle $L$ of non-negative degree, an \emph{$L$-twisted Higgs pair} of type is a pair $(V,\varphi)$, where $V$ is a holomorphic vector bundle over $X$ and $\varphi\in H^0(X,\End(V)\otimes L)$. So, we are just twisting the Higgs field by $L$ instead of $K$. 


\begin{definition}\label{def:sstabbundlerank2}
A rank $2$ and degree $d$, $L$-twisted Higgs pair $(V,\varphi)$ is \emph{semistable} if $\deg(F)\leq d/2$ for any line subbundle $F\subset V$ such that $\varphi(F)\subset FL$.
\end{definition}

Let $\M_{L}^{\Lambda}$ denote the moduli space of $L$-twisted Higgs pairs of rank $2$ and degree $d$, with fixed determinant $\Lambda\in\mathrm{Jac}^d(X)$ and with traceless Higgs field.
In this particular case, the \emph{Hitchin map} in $\M_L^\Lambda$ is defined by:
\begin{equation}\label{eqHitchinmap}
\begin{array}{rccl}
  \mathcal H:&\M_L^\Lambda & \longrightarrow & H^0(X,L^2)\\
  &(V,\varphi) & \longmapsto & \det(\varphi).
\end{array}
\end{equation}

We can naturally associate a $\xi$-twisted Higgs pair to a given quadric bundle $(V,\gamma)$, of rank $2$, where $\xi=\Lambda^2V^{-1}K$. This is done by taking advantage of the fact that for a $2$-dimensional vector space $\VV$, there is an isomorphism $\VV\otimes \Lambda^2\VV^*\cong\VV^*$ given by $v\otimes\phi\mapsto\phi(v\wedge -)$.
Then, from such quadric bundle, simply associate the $\xi$-twisted Higgs pair $(V,g^{-1}\gamma)$, where $g$ is the isomorphism
\begin{equation}\label{isomorphism Vdual VtensordetV}
g:V\otimes\xi\stackrel{\cong}{\longrightarrow} V^*\otimes K
\end{equation}
given by $g(v\otimes\phi\otimes u)=\phi(v\wedge -)\otimes u$; so indeed $g^{-1}\gamma:V\to V\otimes\xi$.
Choosing appropriate local frames, $g$ is locally given by $\left(\begin{smallmatrix}0 & -1 \\1 & 0\end{smallmatrix}\right)$ so 
\begin{equation}\label{eq:char-twistedHiggs}
\det(g^{-1}\gamma)=\det(\gamma)\,\text{ and }\,\tr(g^{-1}\gamma)=0,
\end{equation} due to the symmetry of $\gamma$.
Moreover, it is easy to see that $(V,\gamma)$ is $\alpha_m^-$-semistable if and only if the corresponding $(V,g^{-1}\gamma)$ is semistable as in Definition \ref{def:sstabbundlerank2}. So if $(V,\gamma)\in\N_{\alpha_m^-}$, then $(V,g^{-1}\gamma)$ represents a point in $\M^{\xi^{-1}K}_\xi$.

Let us now go back to the map $h$ in \eqref{def of Hitchin map}. Let $(D,\xi)$ be any pair in $\mathcal P_X$. We want to describe the fibre of $h$ over $(D,\xi)$, i.e., the space of isomorphism classes of $\alpha_m^-$-polystable quadric bundles $(V,\gamma)$ such that $\divisor(\det(\gamma))=D$ and $\Lambda^2V\cong \xi^{-1}K$.
The following result gives the fibre $h^{-1}(D,\xi)$ in terms of the fibre $\mathcal H^{-1}(s)$, for a certain section $s$ of $\xi^2$.
\begin{proposition}[\cite{gothen-oliveira:2012}]\label{fibres quad and Higgs isomorphic}
Let $(D,\xi)\in \mathcal P_X$ and choose some $s\in H^0(X,\xi^2)$ such that $\divisor(s)=D$. Then $h^{-1}(D,\xi)\in\N_{\alpha_m^-}(d)$ is isomorphic to $\mathcal H^{-1}(s)\in\M_{\xi}^{\xi^{-1}K}$.
\end{proposition}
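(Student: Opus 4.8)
The plan is to promote the object-level correspondence $(V,\gamma)\mapsto (V,g^{-1}\gamma)$ constructed just above into an isomorphism of the two fibres, using the chosen section $s$ to rigidify the scaling ambiguity of $\gamma$. First I would collect the facts already available. By \eqref{eq:char-twistedHiggs} the associated $\xi$-twisted pair satisfies $\det(g^{-1}\gamma)=\det(\gamma)$ and $\tr(g^{-1}\gamma)=0$, and $(V,\gamma)$ is $\alpha_m^-$-semistable exactly when $(V,g^{-1}\gamma)$ is semistable in the sense of Definition~\ref{def:sstabbundlerank2}; one checks the analogous equivalence for polystability, so the construction already carries $\alpha_m^-$-polystable quadric bundles to polystable points of $\M^{\xi^{-1}K}_\xi$. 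Now $(V,\gamma)\in h^{-1}(D,\xi)$ means precisely $\Lambda^2 V\cong\xi^{-1}K$ and $\divisor(\det\gamma)=D$. Since both $\det(\gamma)$ and $s$ are sections of $\xi^2$ with the same divisor $D$, their ratio is a nowhere-vanishing holomorphic function, so $\det(\gamma)=c\,s$ for a unique $c\in\C^*$.

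Next I would normalise. The automorphism $a\,\mathrm{Id}_V$ of $V$ rescales $\gamma\mapsto a^{-2}\gamma$, hence $\det(\gamma)\mapsto a^{-4}\det(\gamma)$, while leaving $\divisor(\det\gamma)$ and $\Lambda^2 V$ unchanged; choosing $a$ with $a^4=c$ therefore replaces $(V,\gamma)$ by an isomorphic quadric bundle with $\det(\gamma)=s$ exactly. Thus every class in $h^{-1}(D,\xi)$ admits a representative with $\det(\gamma)=s$, whose associated pair $(V,g^{-1}\gamma)$ lies in $\mathcal{H}^{-1}(s)$, and this defines the candidate map $h^{-1}(D,\xi)\to\mathcal{H}^{-1}(s)$.

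I would then check it is a bijection. For surjectivity, given $(V,\varphi)\in\mathcal{H}^{-1}(s)$ I set $\gamma:=g\varphi$; the vanishing of $\tr\varphi$ forces $\gamma$ to be symmetric (the converse of the computation behind \eqref{eq:char-twistedHiggs}), so $(V,\gamma)$ is a quadric bundle with $\det(\gamma)=\det(\varphi)=s$, whence $\divisor(\det\gamma)=D$ and $\Lambda^2 V^{-1}K=\xi$, i.e. $(V,\gamma)\in h^{-1}(D,\xi)$, mapping back to $(V,\varphi)$. For well-definedness and injectivity I would use that $g$ is a \emph{canonical} isomorphism, so a bundle map $f$ intertwines two quadric bundles if and only if it intertwines the associated twisted Higgs pairs; moreover any $f$ relating two representatives with $\det(\gamma)=s$ satisfies $(\det f)^2=1$, which is exactly the condition for it to be an isomorphism of pairs in $\M^{\xi^{-1}K}_\xi$. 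Hence isomorphism classes match on the nose. To upgrade this bijection of points to an isomorphism of varieties, I would note that $g$, the passage $\gamma\leftrightarrow g^{-1}\gamma$, and the normalisation by the fixed $s$ are all algebraic and make sense in families over an arbitrary base, so they induce mutually inverse morphisms between the two fibres.

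The \textbf{main obstacle} is the matching of isomorphisms, and in particular the control of scalar automorphisms. A quadric bundle only determines $\gamma$ up to scale, whereas the twisted Higgs pair $(V,\varphi)$ is rigid; it is precisely the fact that $h$ records only $\divisor(\det\gamma)$, together with the choice of $s$, that absorbs this scaling freedom and makes the two fibres coincide. Verifying that the residual $(\det f)^2=1$ ambiguity is accounted for correctly on the Higgs side, so that the map is genuinely one-to-one and not merely finite-to-one, is the delicate point, and is where the naturality of $g$ together with the precise (poly)stability dictionary of \cite{gothen-oliveira:2012} is essential.
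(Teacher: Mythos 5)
Your strategy coincides with the paper's (which itself only sketches the argument and defers the details to \cite{gothen-oliveira:2012}): pass from $(V,\gamma)$ to the twisted pair $(V,g^{-1}\gamma)$, invoke \eqref{eq:char-twistedHiggs} and the stability dictionary, and use the chosen section $s$ with $\divisor(s)=D$ to absorb the scaling ambiguity. Your normalisation step (rescaling by $a\,\mathrm{Id}_V$ so that $\det\gamma=s$ on the nose) and your surjectivity step ($g\varphi$ is symmetric precisely when $\tr\varphi=0$) are both correct.

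The gap is in your well-definedness/injectivity step, exactly at the point you yourself call delicate. The claim that a bundle map $f$ intertwines two quadric bundles \emph{if and only if} it intertwines the associated Higgs pairs is false. From the rank-two identity $f^tJf=(\det f)\,J$, where $J=\left(\begin{smallmatrix}0 & -1 \\ 1 & 0\end{smallmatrix}\right)$ is the local form of $g$, the relation $\gamma=f^t\gamma'f$ is equivalent to
\begin{equation*}
f\varphi \;=\; (\det f)\,\varphi' f, \qquad \varphi=g^{-1}\gamma,\quad \varphi'=g^{-1}\gamma',
\end{equation*}
with $\det f$ read through the chosen identifications $\Lambda^2V\cong\Lambda^2V'\cong\xi^{-1}K$. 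Your normalisation does force $(\det f)^2=1$, but only $\det f=+1$ gives an isomorphism of points of $\M_\xi^{\xi^{-1}K}$; a quadric-bundle isomorphism with $\det f=-1$ gives $(V,\varphi)\cong(V',-\varphi')$ instead. This sign cannot be waved away by ``naturality of $g$'': one always has $(V,\gamma)\cong(V,-\gamma)$ (via $i\,\mathrm{Id}_V$, whose determinant is $-1$), whereas $(V,\varphi)\cong(V,-\varphi)$ is generically false --- if $(V,\varphi)$ is stable, hence simple, an $f$ with $f\varphi f^{-1}=-\varphi$ satisfies $f^2\in\C^*$, so after rescaling $f$ is an involution whose eigen-line-bundles split $V$ with $\varphi$ anti-diagonal; equivalently, under the spectral correspondence $\varphi\mapsto-\varphi$ is pullback by the involution of $X_s$, whose fixed locus in $\mathcal H^{-1}(s)$ is a proper subvariety (finite when $X_s$ is smooth). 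So what your correspondence actually produces is the two-valued assignment $(V,\gamma)\mapsto\{(V,\varphi),(V,-\varphi)\}$, i.e.\ an identification of $h^{-1}(D,\xi)$ with the quotient of $\mathcal H^{-1}(s)$ by this involution, and your argument for injectivity breaks down on the $\det f=-1$ isomorphisms. Dealing with this residual $\Z/2$ is precisely the content the paper delegates to Lemma 4.6 and Remark 4.10 of \cite{gothen-oliveira:2012}; acknowledging that the point is delicate, as you do in your final paragraph, is not the same as resolving it.
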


The isomorphism of this proposition is of course given by the above correspondence between quadric bundles and $\xi$-twisted Higgs pairs. Notice that everything makes sense because of \eqref{eq:char-twistedHiggs}.
A word of caution is however required here since there is a choice of a section $s$ associated to the divisor $D$ in Proposition \ref{fibres quad and Higgs isomorphic}. 
However, the given description of $h^{-1}(D,\xi)$ does not depend of this choice, due to Lemma 4.6 of \cite{gothen-oliveira:2012}; see also Remark 4.10 in loc. cit. for more details. 

Using this we can prove the following.

\begin{theorem}[\cite{gothen-oliveira:2012}]\label{thm:Nalpham}
Let $d<2g-2$. The moduli space $\N_{\alpha_m^-}(d)$ is connected and has dimension $7g-7-3d$.
\end{theorem}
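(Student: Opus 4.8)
The strategy is to reduce the connectedness and dimension count for $\N_{\alpha_m^-}(d)$ to the corresponding statements about the Hitchin fibration on the twisted Higgs moduli space, using Proposition \ref{fibres quad and Higgs isomorphic} fibrewise over the base $\mathcal P_X$. The map $h$ of \eqref{def of Hitchin map} has total space $\N_{\alpha_m^-}(d)$ and base $\mathcal P_X$; since $\mathcal P_X$ is a $2^{2g}$-cover of the symmetric product $\cSym^{4g-4-2d}(X)$, it is \emph{connected} (the symmetric product of a connected curve is connected, and the cover is connected because pulling back $L\mapsto L^2$ along the Abel--Jacobi map gives a connected fibre product — equivalently the monodromy acts transitively on the $2^{2g}$ square roots). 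So once I show each fibre $h^{-1}(D,\xi)$ is connected and that $h$ is surjective with a suitable continuity/properness so that connectedness of base and fibres forces connectedness of the total space, I am done with the first claim.

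\textbf{Fibre connectedness.} By Proposition \ref{fibres quad and Higgs isomorphic}, each fibre $h^{-1}(D,\xi)$ is isomorphic to a Hitchin fibre $\mathcal H^{-1}(s)\subset\M_\xi^{\xi^{-1}K}$, where $s\in H^0(X,\xi^2)$ with $\divisor(s)=D$. This is where the detailed analysis of \emph{all} fibres of the Hitchin map for rank $2$ twisted $\SL(2,\C)$-Higgs bundles (the work of \cite{gothen-oliveira:2013} alluded to in the introduction) enters: I would invoke the description of $\mathcal H^{-1}(s)$ for arbitrary $s$, not merely generic $s$. For generic (reduced) $s$ the fibre is the Prym/spectral Jacobian associated to the smooth spectral curve in the total space of $\xi$ defined by $t^2 = s$, hence connected (an abelian torsor); for non-reduced or otherwise special $s$ one needs the stratified description showing each such fibre is still connected. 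I would quote that result directly, so that every $\mathcal H^{-1}(s)$, and therefore every $h^{-1}(D,\xi)$, is connected. Combined with connectedness of $\mathcal P_X$, this yields connectedness of $\N_{\alpha_m^-}(d)$: any two points can be joined by first moving within fibres to a common section of $h$, then pushing the path down to the connected base and lifting.

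\textbf{Dimension.} For the dimension I would add the dimension of the base to the dimension of a generic fibre. The base $\mathcal P_X$ has dimension $\dim\cSym^{4g-4-2d}(X)=4g-4-2d$. The generic fibre is a Hitchin fibre for the twist $\xi$ with $\deg(\xi)=\deg(\Lambda^2V^{-1}K)=2g-2-d$; the spectral curve $\tilde X\subset\mathrm{Tot}(\xi)$ lies in a surface and, by adjunction, has genus $\tilde g$ computable from $g$ and $\deg\xi$, with the generic fibre a torsor under $\Prym(\tilde X/X)$ of dimension $\tilde g - g$. Computing $\tilde g$ from the double cover branched over $\divisor(s)$ (degree $4g-4-2d$) gives $\tilde g - g = 3g-3-d$, so the total dimension is $(4g-4-2d)+(3g-3-d)=7g-7-3d$, as claimed. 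I would double-check this by the alternative count $\dim\N_{\alpha_m^-}(d)=\dim\M_d^{\alpha_m^-}(\Sp(4,\R))$-type expected dimension, or directly via deformation theory of quadric bundles ($h^1$ of the relevant complex).

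\textbf{Main obstacle.} The genuinely hard input is the connectedness of \emph{every} Hitchin fibre $\mathcal H^{-1}(s)$, including the most degenerate $s$ (high-order zeros, so maximally singular spectral curves); the generic case is classical spectral-curve theory, but the special fibres require the full stratification of \cite{gothen-oliveira:2013} and are where all the real work sits. A secondary but routine point is verifying that $h$ is a well-defined, surjective morphism to $\mathcal P_X$ whose fibre-plus-base connectedness argument is valid — i.e. that $h$ behaves well enough (e.g.\ is proper, or that the family is nice over the connected base) for the ``connected base with connected fibres'' principle to apply rather than merely being a set-theoretic partition.
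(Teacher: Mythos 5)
Your proposal follows essentially the same route as the paper: you fibre $\N_{\alpha_m^-}(d)$ over the connected base $\mathcal P_X$ via the map $h$ of \eqref{def of Hitchin map}, identify each fibre with a Hitchin fibre $\mathcal H^{-1}(s)$ through Proposition \ref{fibres quad and Higgs isomorphic}, invoke the result of \cite{gothen-oliveira:2013} that \emph{every} such fibre (including those over singular or reducible spectral curves) is connected of dimension $3g-3-d$, and add $\dim\mathcal P_X=4g-4-2d$ to the fibre dimension to get $7g-7-3d$. This is exactly the paper's argument, and you correctly locate the hard input in the connectedness of the non-generic Hitchin fibres.
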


The basic idea to prove connectedness is to prove that any fibre of $h$ is connected. For that we use Proposition \ref{fibres quad and Higgs isomorphic} and want to prove that $\mathcal H^{-1}(s)$ is connected for \emph{every} $0\neq s\in H^0(X,\xi^2)$. This is done using the theory of spectral covers and their Jacobians and Prym varieties, as developed in \cite{beauville-narasimhan-ramanan:1989,hitchin:1987,hitchin:1987b}. Besides these classical references, the reader may also check the details of the following definitions for instance in \cite{gothen-oliveira:2013}.

For every $s\neq 0$, there is a naturally associated curve $X_s$ --- the \emph{spectral curve} of $s$ --- inside the total space of $\pi:\xi\to X$. The projection $\pi|_{X_s}:X_s\to X$ is a $2:1$ cover of $X$, with the branch locus being given by the divisor of $s$.

For generic $s\in H^0(X,\xi^2)$ the curve $X_s$ is smooth. It is well-known that $\mathcal H^{-1}(s)$ is indeed (a torsor for) the \emph{Prym variety} of $X_s$. This Prym variety is, in particular, a complex torus, so connected. 
If $\deg(\xi)\geq 2g-2$ then the connectedness of every fibre of $\mathcal H$ follows from the connectedness of the generic fibre (see Proposition 3.7 of \cite{gothen-oliveira:2013}).
It is nevertheless important to notice that $\deg(\xi)=-d+2g-2$, so $\deg(\xi)$ can be any positive integer. Moreover, it is precisely the case $\deg(\xi)<2g-2$ that is of most interest to us, since that is the case relevant to $\Sp(4,\R)$-Higgs bundles. So, for these cases, our knowledge of the generic fibre is not enough to draw conclusions on the connectedness of the singular fibres, that is, the ones where the spectral curve $X_s$ acquires singularities. However, this was achieved by P. Gothen and the author in \cite{gothen-oliveira:2013} as follows. 

When the spectral case is irreducible, we use the correspondence between Higgs pairs on $X$ and rank one torsion free sheaves on $X_s$ \cite{beauville-narasimhan-ramanan:1989} to show that the fibre of the Hitchin map is essentially the compactification by rank $1$ torsion free sheaves of the Prym of the
double cover $X_s \to X$. In order to prove the connectedness of the fibre, we made use of
the the compactification of the Jacobian of $X_s$ by the \emph{parabolic
modules} of Cook \cite{cook:1993,cook:1998}. One advantage of this compactification is that it
fibres over the Jacobian of the normalisation of $X_s$, as opposed to
the compactification by rank one torsion free sheaves.
In the case of reducible spectral curve $X_s$, we gave a direct description of the fibre as a stratified space.
All together, the statement of our result, adapted to the situation under consideration in Proposition \ref{fibres quad and Higgs isomorphic}, is the following.

\begin{theorem}[\cite{gothen-oliveira:2013}]
Consider the Hitchin map $\mathcal H:\M_{\xi}^{\xi^{-1}K}\to H^0(X,\xi^2)$. For any $s$, $\mathcal H^{-1}(s)$ is connected. Moreover, for $s\neq 0$, the dimension of the fibre is $\dim(\mathcal H^{-1}(s))=-d+3g-3$.
\end{theorem}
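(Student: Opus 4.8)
The plan is to compute every fibre of $\mathcal H$ via the spectral correspondence, thereby reducing connectedness to a statement about (compactified) Prym varieties. First I would fix the spectral curve: for $0\neq s\in H^0(X,\xi^2)$, inside the total space of $\pi:\xi\to X$ the tautological section $\lambda$ of $\pi^*\xi$ satisfies $\lambda^2+\pi^*s=0$ on a curve $X_s$, and $\pi|_{X_s}:X_s\to X$ is the double cover branched along $\divisor(s)$. By the Beauville--Narasimhan--Ramanan correspondence \cite{beauville-narasimhan-ramanan:1989}, a $\xi$-twisted Higgs pair $(V,\varphi)$ with $\det(\varphi)=s$ is the same datum as a rank one torsion-free sheaf $\mathcal F$ on $X_s$ with $\pi_*\mathcal F=V$, the field $\varphi$ being the push-forward of multiplication by $\lambda$. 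Since $\pi_*\cO_{X_s}=\cO_X\oplus\xi^{-1}$, fixing $\Lambda^2V=\xi^{-1}K$ together with $\tr(\varphi)=0$ amounts to fixing the norm of $\mathcal F$; hence $\mathcal H^{-1}(s)$ is identified with a torsor for the Prym variety of $X_s\to X$ when $X_s$ is smooth, and with a suitable compactification of it otherwise.

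Next I would carry out the dimension count. As $\deg(\xi)=-d+2g-2$, the branch divisor has degree $\deg(\xi^2)=-2d+4g-4$, so Riemann--Hurwitz gives $2g_{X_s}-2=2(2g-2)+(-2d+4g-4)$, that is $g_{X_s}=4g-3-d$ for smooth $X_s$. The Prym then has dimension $g_{X_s}-g=-d+3g-3$, the asserted fibre dimension; this settles the generic case immediately, a Prym variety being an abelian variety and hence connected.

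The hard part is the singular fibres, exactly where knowledge of the generic fibre is insufficient (recall $\deg(\xi)$ may be much smaller than $2g-2$). I would split according to whether $X_s$ is integral. When $X_s$ is integral but singular, $\mathcal H^{-1}(s)$ is the compactification of the Prym by rank one torsion-free sheaves; to see it is connected I would instead use Cook's compactification by \emph{parabolic modules} \cite{cook:1993,cook:1998}, whose decisive advantage is that it fibres over $\Jac(\widetilde X_s)$, the Jacobian of the normalisation. Connectedness then follows because $\Jac(\widetilde X_s)$ is a torus and the fibres of this parabolic-module fibration are themselves connected, so the total space is connected; transporting this back along the correspondence gives connectedness of $\mathcal H^{-1}(s)$. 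When $X_s$ is reducible (which occurs precisely when $s=-t^2$ for some $t\in H^0(X,\xi)$, so that $X_s$ is two copies of $X$ meeting over the zeros of $t$), I would describe $\mathcal H^{-1}(s)$ directly as a stratified space, list its strata, and check that they glue into a connected whole.

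It remains to treat the nilpotent cone $s=0$ (omitted from the dimension claim): here $\varphi$ is nilpotent and $X_s$ is the non-reduced ribbon, so the sheaf-theoretic picture degenerates and a separate stratification argument is required. I expect the principal obstacle to be the integral-singular case --- controlling the compactified Prym of a curve with arbitrary singularities --- for which the passage to Cook's parabolic modules, fibred over $\Jac(\widetilde X_s)$, is the essential device that renders connectedness transparent.
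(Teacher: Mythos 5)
Your proposal follows essentially the same route as the paper's own sketch: the Beauville--Narasimhan--Ramanan spectral correspondence, connectedness of the generic fibre as a torsor for the Prym variety of the smooth spectral curve (and your Riemann--Hurwitz count $g_{X_s}=4g-3-d$, giving $\dim\mathcal H^{-1}(s)=g_{X_s}-g=-d+3g-3$, is correct), Cook's parabolic-module compactification --- chosen precisely because it fibres over the Jacobian of the normalisation --- for integral singular $X_s$, and a direct stratified description for reducible $X_s$ (i.e.\ $s=-t^2$). The only caveat is that the theorem also asserts connectedness of the nilpotent cone $\mathcal H^{-1}(0)$, which you flag but do not prove; since the survey's sketch likewise defers that case to \cite{gothen-oliveira:2013}, this does not distinguish your argument from the paper's.
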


As $\mathcal P_X$ is connected and of dimension $4g-4-2d$, this settles Theorem \ref{thm:Nalpham}.

The following corollary of Theorem \ref{thm:Nalpham} is immediate.

\begin{corollary}\label{cor:connectednessN-;g-1<d<2g-2}
If $g-1<d<2g-2$, then the moduli space $\N_0(d)$ is connected.
\end{corollary}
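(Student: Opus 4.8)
The plan is to reduce the statement immediately to Theorem~\ref{thm:Nalpham} by means of the stabilisation result of Proposition~\ref{injectivity-stabilization}. The one observation that makes everything work is that the hypothesis $g-1<d$ is exactly the assertion that $0<\alpha_m$, since $\alpha_m=d-g+1$. Consequently the value $\alpha=0$ genuinely lies in the range $[0,\alpha_m)$ of parameters to which Proposition~\ref{injectivity-stabilization} applies, and it is this translation of the hypothesis that is the heart of the (very short) argument.

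Concretely, I would first pick any $\alpha_1$ with $0<\alpha_1<\alpha_m$, that is, a value of type $\alpha_m^-$, and apply Proposition~\ref{injectivity-stabilization} with $\alpha_2=0\le\alpha_1<\alpha_m$. This yields an isomorphism $\N_0(d)\cong\N_{\alpha_m^-}(d)$; note that the description of $\N_{\alpha_m^-}(d)$ is unambiguous precisely because the same proposition shows all the spaces $\N_{\alpha'}(d)$ with $\alpha'<\alpha_m$ are mutually isomorphic. Next, since the remaining hypothesis $d<2g-2$ is exactly what Theorem~\ref{thm:Nalpham} requires, that theorem gives the connectedness of $\N_{\alpha_m^-}(d)$. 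Transporting connectedness across the isomorphism then shows that $\N_0(d)$ is connected, which is the claim.

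I do not expect any genuine obstacle here; the corollary is essentially a matter of verifying that $0$ is an admissible small parameter. The only point requiring attention is the role of the lower bound $g-1<d$: it is exactly what guarantees $0<\alpha_m$, so that $\alpha=0$ sits strictly below the stabilisation threshold. Were we instead in the regime $d\le g-1$, the parameter $0$ would lie at or beyond $\alpha_m$, the isomorphism of Proposition~\ref{injectivity-stabilization} would no longer be available, and this direct reduction would fail — so the hypothesis is used in an essential way even though the deduction itself is immediate.
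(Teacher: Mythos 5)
Your proof is correct and is essentially the paper's own argument: the paper treats the corollary as immediate from Theorem~\ref{thm:Nalpham}, since $g-1<d$ means precisely that $0<\alpha_m=d-g+1$, so $\alpha=0$ is itself a value of type $\alpha_m^-$ (by Proposition~\ref{injectivity-stabilization}, $\N_0(d)$ is then the space $\N_{\alpha_m^-}(d)$ shown there to be connected). Your explicit invocation of the stabilisation isomorphism with $\alpha_2=0$ is just a slightly more spelled-out version of the same reduction.
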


For the cases $0<d<g-1$, we must take into account other values of the parameter and not just $\alpha_m^-$.

\subsection{Critical values and wall-crossing}

Having established the connectedness of the space $\N_{\alpha_m^-}(d)$ in Theorem \ref{thm:Nalpham}, the purpose of this section is to study the variation of the moduli
spaces $\mathcal{N}_{\alpha}(d)$ with the stability parameter $\alpha$.  Recall that the goal is to be able to say something about the connectedness of $\N_0(d)$.
As in several other cases \cite{thaddeus:1994,bradlow-garcia-prada-gothen:2004 triples},
 we have \emph{critical values} for the parameter. These are special values $\alpha_k$, for which the $\alpha$-semistability condition changes.
One proves that indeed there are a finite number of these critical values and, more precisely, that $\alpha$ is a critical value if and only if it is equal to $d/2$ or to $\alpha_k=[d/2]-k$, with $k=0,\ldots,-d+g-1+[d/2]$.
By definition, on each open interval between consecutive critical values, the $\alpha$-semistability condition does not vary, hence the corresponding moduli spaces are isomorphic.
If $\alpha_k^+$ denotes the value of any parameter between the critical values $\alpha_k$ and $\alpha_{k+1}$, we can write without ambiguity 
$\N_{\alpha_k^+}(d)$
for the moduli space of $\alpha_k^+$-semistable quadric bundles of
 for any $\alpha$ between $\alpha_k$ and $\alpha_{k+1}$. Likewise, define $\N_{\alpha_k^-}(d)$, with $\alpha_k^-$ denoting any value between the critical values $\alpha_{k-1}$ and $\alpha_k$. With this notation we have $\N_{\alpha_k^+}(d)=\N_{\alpha_{k+1}^-}(d)$.

The information obtained so far on the variation of $\mathcal{N}_{\alpha}(d)$
with $\alpha$ and $d$ is summarised in the next graphic. 

\vspace{.5cm}

\noindent\begin{tabular}{l l}
\begin{minipage}{0.6\textwidth}
\includegraphics[scale=1.3]{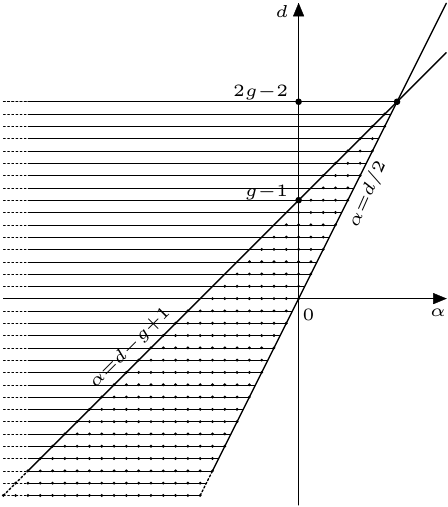}
\end{minipage}
\begin{minipage}{0.375\textwidth}
Above the line
  $d=2g-2$, $\N_\alpha(d)$ is empty as well as on the right of the
  line $\alpha=d/2$. For a fixed $d<2g-2$, the region on the left of the line
  $\alpha=\alpha_m=d-g+1$, is the region $\N_{\alpha_m^-}(d)$ described in the previous section, where there are no critical
  values. The critical values are represented by the dots between the lines $\alpha=\alpha_m$ and $\alpha=d/2$.
\end{minipage}
&
\end{tabular}

\vspace{.5cm}

Given a critical value $\alpha_k$ we have the corresponding subvariety $\cS_{\alpha_k^+}(d)\subset\N_{\alpha_k^+}(d)$ consisting of those pairs which are
$\alpha_k^+$-semistable but $\alpha_k^-$-unstable. In the same manner, define the subvariety $\cS_{\alpha_k^-}(d)\subset\N_{\alpha_k^-}(d)$.
Consequently,
\begin{equation}\label{N--S-=N+-S+}
  \N_{\alpha_k^-}(d)\setminus\cS_{\alpha_k^-}(d)
  \cong\N_{\alpha_k^+}(d)\setminus\cS_{\alpha_k^+}(d).
\end{equation} So the spaces $\cS_{\alpha_k^\pm}(d)$ encode the difference between the spaces $\N_{\alpha_k^-}(d)$ and $\N_{\alpha_k^+}(d)$ on opposite sides of the critical value $\alpha_k$. This difference is usually known as the \emph{wall-crossing} phenomena through $\alpha_k$. In order to study this wall-crossing we need a description of the spaces $\cS_{\alpha_k^\pm}(d)$. 

In Section 3 of \cite{gothen-oliveira:2012} we studied these $\cS_{\alpha_k^\pm}(d)$ for any critical value.
In particular, it was enough to conclude that they have high codimension in $\N_{\alpha_k^\pm}(d)$. For technical reasons, we had to impose the condition $d<g-1$. More precisely, we have the following.

\begin{proposition}\label{prop:codimS+-}
Suppose that $d<g-1$. Then $\dim \N_\alpha(d)=7g-7-3d$, for any $\alpha\leq d/2$. Moreover, for any $k$, the codimensions of $\cS_{\alpha_k^\pm}(d)\subset\N_{\alpha_k^\pm}(d)$ are strictly positive. 
\end{proposition}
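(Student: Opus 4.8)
The plan is to propagate the dimension formula from the region $\alpha<\alpha_m$, where it is already available, up to $\alpha=d/2$ by an induction on the finitely many critical values $\alpha_k=[d/2]-k$, controlling at each step the flip loci $\cS_{\alpha_k^\pm}(d)$. The base of the induction is Theorem \ref{thm:Nalpham} together with Proposition \ref{injectivity-stabilization}: these give $\dim\N_\alpha(d)=7g-7-3d$ for every $\alpha<\alpha_m$, and in particular for $\alpha=\alpha_m^-$. (This is also the expected dimension: the deformation complex of a quadric bundle is $\End(V)\to S^2V^*\otimes K$, with $\chi(S^2V^*\otimes K)-\chi(\End V)=7g-7-3d$.) The inductive step crosses the wall at $\alpha_k$: assuming $\dim\N_{\alpha_k^-}(d)=7g-7-3d$, I would show $\codim\cS_{\alpha_k^\pm}(d)>0$, and then conclude from the isomorphism \eqref{N--S-=N+-S+} that $\dim\N_{\alpha_k^+}(d)=\dim\N_{\alpha_k^-}(d)=7g-7-3d$, which both advances the induction and yields the codimension statement.

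To analyse a wall I would first identify the objects lying on it. Reading Definition \ref{simplified notion of polystability}, the only $\alpha$-dependent conditions are $(1)$ and $(3)$, so a pair changes stability across $\alpha_k$ precisely through a line subbundle $L\subset V$ that saturates one of these: either $\gamma(L)=0$ with $\deg(L)=\alpha_k$ (the \emph{isotropic} type, forcing $L\subset\ker\gamma$), or $\gamma|_L\neq 0$ with $\deg(L)=d-\alpha_k$ (the \emph{non-isotropic} type). In the non-isotropic case $(L,\gamma|_L)$ is itself a rank $1$ quadric bundle with $\gamma|_L\in H^0(L^{-2}K)\setminus\{0\}$, and the split polystable representative on the wall is an orthogonal direct sum $(L,\sigma)\oplus(M,\tau)$ with $\deg(L)+\deg(M)=d$ and $\sigma,\tau$ non-zero. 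I would then describe $\cS_{\alpha_k^\pm}(d)$ as parametrising the non-split extensions with this associated graded, the two signs corresponding to whether the sub or the quotient line bundle becomes destabilising as $\alpha$ moves off $\alpha_k$.

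The core of the argument is the dimension count of these strata. A rank $1$ quadric bundle $(L,\sigma)$ is the same datum as its divisor $\divisor(\sigma)\in\cSym^{2g-2-2\deg(L)}(X)$ together with a square root $L$ of $K(-\divisor(\sigma))$; hence, exactly as for $\mathcal P_X$, the family of such $(L,[\sigma])$ has dimension $2g-2-2\deg(L)$. Combining the two rank $1$ pieces and accounting for the automorphisms (independent scalings of the two factors, which also kill the overall $\C^*$-scaling of $\gamma$), the locus of orthogonally split wall objects has dimension $(2g-2-2\deg(L))+(2g-2-2\deg(M))=4g-4-2d$, i.e.\ codimension $3g-3-d$ in $\N$; this is strictly positive since $d<g-1<3g-3$. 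The genuinely non-split, $\alpha_k^\mp$-stable objects form the generic part of $\cS_{\alpha_k^\pm}(d)$, and for these I would add the extension parameters, governed by $h^1(LM^{-1})$ with $\deg(LM^{-1})=d-2\alpha_k$, and the freedom in the lower graded pieces $M^{-2}K$ and $(LM)^{-1}K$ of $S^2V^*\otimes K$, subtracting the automorphisms of the extension. As $k$ varies, $\deg(L)$ ranges over $[\lceil d/2\rceil,g-1]$ in the non-isotropic case and over $[d-g+1,[d/2]]$ in the isotropic case; the hypothesis $d<g-1$ keeps every degree that appears below $g-1$, so all the relevant $h^0$ and $h^1$ take their generic Riemann--Roch values and the count does not overshoot $7g-7-3d$.

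The main obstacle is precisely this non-split count, and it is exactly where $d<g-1$ is used. Two points require care. First, a non-split extension with $\gamma|_L\neq 0$ must have \emph{degenerate} $\gamma$ (otherwise $V=L\oplus L^{\perp_\gamma}$ splits as a bundle), so one is counting a locus cut out inside the space of quadrics by the degeneracy of $\gamma$, and one must check that this constraint, the constraint $\gamma|_L\neq 0$ tying $L$ to an effective divisor (a symmetric-product bound of the form $\min(\,\cdot\,,g)$), and the extension parameters do not conspire to produce a full-dimensional family. Second, the strict inequality must be verified uniformly over all critical values $k$ and over both types; the bound $d<g-1$ is what forces the relevant degrees below $g-1$ so that the generic cohomology vanishings hold and the codimension stays positive throughout. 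Equivalently, one may phrase the whole count deformation-theoretically, computing each codimension as the dimension of the ``off-diagonal'' hypercohomology of the deformation complex, in the spirit of \cite{thaddeus:1994,bradlow-garcia-prada-gothen:2004 triples}; the positivity of that group under $d<g-1$ is the crux.
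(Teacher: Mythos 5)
Your overall architecture is exactly the one the paper relies on (the proposition is quoted from Section~3 of \cite{gothen-oliveira:2012}, and the proof there runs as you outline): take $\dim\N_{\alpha_m^-}(d)=7g-7-3d$ from Theorem~\ref{thm:Nalpham} as the base case, identify the two types of destabilising line subbundles at a wall (isotropic, $\gamma(L)=0$ with $\deg L=\alpha_k$; non-isotropic, $\gamma(L)\not\subset L^\perp K$ with $\deg L=d-\alpha_k$), bound the dimensions of the flip loci $\cS_{\alpha_k^\pm}(d)$ by direct parameter counts, and propagate the dimension across the walls via \eqref{N--S-=N+-S+}. Your Euler-characteristic check $\chi(S^2V^*\otimes K)-\chi(\End V)=7g-7-3d$, your identification of the wall objects, and your count of the orthogonally split locus via the divisor-plus-square-root description of rank~$1$ quadric bundles are all correct.

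The genuine gap is the one you yourself flag as ``the crux'': the dimension bound for the non-split strata of $\cS_{\alpha_k^\pm}(d)$ --- which is precisely the content of Corollaries~3.11 and~3.15 of \cite{gothen-oliveira:2012}, the results this proposition actually rests on --- is never carried out, and the substitute you offer does not work as stated. Appealing to ``generic Riemann--Roch values'' is not legitimate here: the flip loci fibre over families containing arbitrary, in particular special, line bundles, so the quantities entering the count ($h^0(M^{-2}K)$, $h^0((LM)^{-1}K)$, $h^1(LM^{-1})$, and the automorphism spaces) jump on Brill--Noether-type subloci, and the upper bound must hold on \emph{every} stratum, not just the generic one; moreover, on a non-split extension the ``components'' of $\gamma$ are only defined through the filtration of $S^2V^*\otimes K$, so a genuine stratified count with the attendant bookkeeping is required. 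Relatedly, your explanation of where $d<g-1$ enters is not accurate: in the non-isotropic case $\deg L$ ranges up to $g-1$ no matter what $d$ is, so the hypothesis does not ``keep every degree below $g-1$''; in the actual proof it emerges only at the end, when the explicit upper bounds for $\dim\cS_{\alpha_k^\pm}(d)$ are compared with $7g-7-3d$ uniformly over all critical values (indeed, as the paper remarks, at $d=g-1$ positivity survives only at the wall $\alpha_m$). Finally, a small logical point: you cannot speak of $\codim\cS_{\alpha_k^+}(d)\subset\N_{\alpha_k^+}(d)$ before knowing $\dim\N_{\alpha_k^+}(d)$; what the induction needs --- and what completed absolute counts would deliver --- is the inequality $\dim\cS_{\alpha_k^\pm}(d)<7g-7-3d$, from which both assertions of the proposition follow at once.
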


\subsection{Connectedness of the moduli spaces of quadric bundles}

Since we already know from Theorem \ref{thm:Nalpham} that $\N_{\alpha_m^-}(d)$ is connected for any $d<2g-2$, Proposition \ref{prop:codimS+-} yields the following (cf. \cite[Theorem 5.3]{gothen-oliveira:2012}):

\begin{theorem}
The moduli spaces $\N_\alpha(d)$ are connected for any $d<g-1$ and any $\alpha<d/2$.
\end{theorem}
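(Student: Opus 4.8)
The plan is to propagate the connectedness of $\N_{\alpha_m^-}(d)$, established in Theorem~\ref{thm:Nalpham}, across all the walls up to the line $\alpha=d/2$, using the codimension estimates of Proposition~\ref{prop:codimS+-}. The hypothesis $d<g-1$ is exactly the range in which that proposition applies, so it is in force throughout. First I would record the global picture of the parameter line: by Proposition~\ref{injectivity-stabilization} the moduli spaces are constant (isomorphic) for $\alpha<\alpha_m$, the value $\alpha_m=d-g+1$ is the smallest critical value, and on each open interval between consecutive critical values $\alpha_k$ the $\alpha$-semistability condition does not change, so the corresponding moduli spaces are isomorphic. Since there are only finitely many critical values, it suffices to control what happens as $\alpha$ increases through each single wall $\alpha_k$, starting from the connected space $\N_{\alpha_m^-}(d)$ and finishing just below $d/2$.

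I would then carry out the inductive step: assuming $\N_{\alpha_k^-}(d)$ is connected, deduce that $\N_{\alpha_k^+}(d)$ is connected. The mechanism is the identification \eqref{N--S-=N+-S+}, which exhibits a common open subset $U\cong\N_{\alpha_k^-}(d)\setminus\cS_{\alpha_k^-}(d)\cong\N_{\alpha_k^+}(d)\setminus\cS_{\alpha_k^+}(d)$. By Proposition~\ref{prop:codimS+-} the flip loci $\cS_{\alpha_k^\pm}(d)$ have strictly positive codimension while every $\N_\alpha(d)$ has the same pure dimension $7g-7-3d$; hence $U$ is open and dense in both $\N_{\alpha_k^-}(d)$ and $\N_{\alpha_k^+}(d)$. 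Because a complex variety is locally connected in the analytic topology, its connected components are open, so density of $U$ forces every component of $\N_{\alpha_k^+}(d)$ to meet $U$. It therefore suffices to know that $U$ is connected, and this I would extract from the connectedness of $\N_{\alpha_k^-}(d)$: removing the positive-codimension locus $\cS_{\alpha_k^-}(d)$ from it should not disconnect the dense open part $U$.

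The hard part will be precisely this last connectedness-preservation across the wall, since in general deleting a codimension-one subvariety from a merely connected (possibly reducible) variety can separate it, for instance along the intersection of two components. What makes the argument go through here is that $\cS_{\alpha_k^-}(d)$ is not such a separating locus but a proper flip subvariety of positive codimension, so that $U$ remains connected; making this airtight is where one must either invoke irreducibility of the relevant strata or argue directly that each point of a flip locus can be joined to $U$ by deforming the destabilising extension (the detailed verification is carried out in \cite[Theorem~5.3]{gothen-oliveira:2012}). Granting this, an induction over the finitely many critical values between $\alpha_m$ and $d/2$ yields connectedness of $\N_\alpha(d)$ for every $\alpha<d/2$, which is the claim.
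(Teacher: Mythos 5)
Your proposal follows essentially the same route as the paper: start from the connectedness of $\N_{\alpha_m^-}(d)$ (Theorem \ref{thm:Nalpham}), then propagate across the finitely many critical values using the identification \eqref{N--S-=N+-S+} together with the strictly positive codimension of the flip loci $\cS_{\alpha_k^\pm}(d)$ from Proposition \ref{prop:codimS+-} (which is where the hypothesis $d<g-1$ enters), exactly as the paper does when it deduces the theorem from these two ingredients, citing \cite{gothen-oliveira:2012} for the details. You also correctly isolate the one delicate point --- that removing a positive-codimension locus from a merely connected, possibly reducible variety need not preserve connectedness --- which the survey likewise glosses over by deferring to the full argument in Theorem 5.3 of \cite{gothen-oliveira:2012}.
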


\begin{corollary}\label{cor:connectednessN-;0<d<g-1}
If $0<d<g-1$, then the moduli space $\N_0(d)$ is connected.
\end{corollary}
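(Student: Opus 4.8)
The statement is exactly the case $\alpha=0$ of the theorem proved immediately above, so the plan is first to reduce to that theorem and then, for completeness, to recall why the theorem holds. The reduction is purely a matter of checking hypotheses: the assumption $0<d<g-1$ gives on one hand $d<g-1$, which is the standing hypothesis of the theorem, and on the other hand $d>0$, which yields $0=\alpha<d/2$. Hence $\N_0(d)$ is one of the spaces $\N_\alpha(d)$ with $\alpha<d/2$ and $d<g-1$, and connectedness follows at once.

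To expose the mechanism behind that theorem, and thus make the argument self-contained, I would run the wall-crossing induction. The base case is Theorem \ref{thm:Nalpham}: for every $d<2g-2$ the space $\N_{\alpha_m^-}(d)$ is connected. Since $d<g-1$ forces $\alpha_m=d-g+1<0$, while $d>0$ gives $d/2>0$, the value $\alpha=0$ lies strictly inside the interval $(\alpha_m,d/2)$ where wall-crossing takes place. I would then propagate connectedness across each critical value $\alpha_k$ lying between $\alpha_m$ and $0$ by means of the identification \eqref{N--S-=N+-S+}: away from the flip loci $\cS_{\alpha_k^\pm}(d)$ the two neighbouring moduli spaces coincide, so their common open part is connected, and adjoining back a closed subset of positive codimension to a connected, equidimensional variety cannot disconnect it. Proposition \ref{prop:codimS+-} supplies exactly what this step needs: every $\N_\alpha(d)$ with $\alpha\leq d/2$ has the same dimension $7g-7-3d$, and each $\cS_{\alpha_k^\pm}(d)$ has strictly positive codimension. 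Iterating from $\alpha_m^-$ up to $\alpha=0$ then gives the connectedness of $\N_0(d)$.

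The genuine difficulty lies not in this corollary but in its input: the codimension estimate of Proposition \ref{prop:codimS+-}, whose proof rests on a careful description of the flip loci $\cS_{\alpha_k^\pm}(d)$ and is precisely where the hypothesis $d<g-1$ is used. The one point requiring care in the propagation itself is equidimensionality: one must know that $\N_\alpha(d)$ is pure of dimension $7g-7-3d$ across the whole range, so that removing or adjoining a positive-codimension locus truly preserves connectedness of the space rather than merely that of a dense open piece. Both facts are delivered by Proposition \ref{prop:codimS+-}, so once that proposition is granted the corollary is immediate.
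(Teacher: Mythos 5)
Your first paragraph is exactly the paper's proof: the corollary is the case $\alpha=0$ of the theorem stated immediately before it, and the hypothesis $0<d<g-1$ supplies both of that theorem's assumptions, since $d<g-1$ and $0=\alpha<d/2$ because $d>0$. That reduction is correct and complete, and nothing more is required.

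The supplementary wall-crossing sketch, however, rests on a principle that is false as stated. You claim that the common open part $\N_{\alpha_k^-}(d)\setminus\cS_{\alpha_k^-}(d)$ is connected because $\N_{\alpha_k^-}(d)$ is connected, equidimensional, and $\cS_{\alpha_k^-}(d)$ has positive codimension. The \emph{adjoining} direction is fine: if $U$ is a dense open connected subset of $Y$, then $Y=\overline{U}$ is connected. But the \emph{removal} direction fails without irreducibility: the union of two $2$-planes in $\C^4$ meeting in a single point is connected and pure of dimension $2$, yet deleting that point (a closed subset of codimension $2$) disconnects it. So connectedness plus equidimensionality plus positive codimension of the flip locus does not by itself yield connectedness of the complement, which is precisely what your induction step needs; in particular, equidimensionality is not ``the one point requiring care''---the delicate point is why excising $\cS_{\alpha_k^-}(d)$ cannot disconnect the space. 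The standard way to close this gap in such arguments (and what the proof of Theorem 5.3 of \cite{gothen-oliveira:2012}, following the scheme of the holomorphic triples literature, is organised to deliver) is either to carry a stronger property such as irreducibility across the walls---a dense open subset of an irreducible variety is irreducible, hence connected, and taking closures on the other side of the wall preserves irreducibility---or to argue directly that the flip loci do not disconnect. Since the corollary itself only needs the theorem as stated in the paper, this flaw does not affect the validity of your first paragraph; but as a self-contained derivation of that theorem from Theorem \ref{thm:Nalpham} and Proposition \ref{prop:codimS+-} via \eqref{N--S-=N+-S+}, your propagation step is incomplete.
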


Recall that we want to study the connected components of the moduli space $\N_0(d)$ of $0$-polystable quadric bundles, for any $0<d<2g-2$. From Corollaries \ref{cor:connectednessN-;g-1<d<2g-2} and \ref{cor:connectednessN-;0<d<g-1} we see that the only remaining case to understand is when $d=g-1$. Notice that the space $\N_0(g-1)$ is really $\N_{\alpha_m}(g-1)$.
Now, although the codimensions of every $\cS_{\alpha_k^\pm}(d)$ are only known under the condition $d<g-1$, it follows from Corollaries 3.11 and 3.15 of \cite{gothen-oliveira:2012} that the codimensions of both $\cS_{\alpha_m^\pm}(d)$ are known to be positive also when $d=g-1$. From this, arguing as in the last paragraph of the proof of \cite[Theorem 5.3]{gothen-oliveira:2012}, we prove that also $\N_0(g-1)$ is connected. So we conclude that:

\begin{theorem}\label{prop:N0connected}
The moduli space of $0$-polystable quadric bundles of degree $d$ is connected for every $0<d<2g-2$.
\end{theorem}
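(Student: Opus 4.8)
The plan is to prove Theorem \ref{prop:N0connected} by assembling the three cases of $d$ that together cover the range $0<d<2g-2$, using the connectedness of the ``small parameter'' moduli space as the anchor and controlling the wall-crossing as $\alpha$ increases toward $0$. The target space is $\N_0(d)$, and the key structural fact is that $\N_0(d)$ sits at a definite position relative to the stabilisation parameter $\alpha_m=d-g+1$ and the upper bound $d/2$; the relative location of $0$ with respect to $\alpha_m$ is precisely what distinguishes the cases. The overarching strategy is: (i) establish connectedness at $\alpha_m^-$ for all $d<2g-2$ via Theorem \ref{thm:Nalpham}; (ii) show the wall-crossing subvarieties $\cS_{\alpha_k^\pm}(d)$ have positive codimension so that crossing each critical value preserves connectedness; (iii) deduce connectedness of $\N_0(d)$ in each range of $d$.

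First I would dispose of the two outer ranges, which are already corollaries in the excerpt. For $g-1<d<2g-2$, one has $\alpha_m=d-g+1>0$, so the value $\alpha=0$ lies in the ``small parameter'' region $\alpha<\alpha_m$ where no critical values occur; hence $\N_0(d)\cong\N_{\alpha_m^-}(d)$, which is connected by Theorem \ref{thm:Nalpham} (this is exactly Corollary \ref{cor:connectednessN-;g-1<d<2g-2}). For $0<d<g-1$, one has $\alpha_m<0$, so $0$ lies strictly to the right of the stabilisation region, and one must cross finitely many critical values $\alpha_k$ to get from $\alpha_m^-$ up to $\alpha=0<d/2$. By Proposition \ref{prop:codimS+-}, each $\cS_{\alpha_k^\pm}(d)$ has strictly positive codimension in $\N_{\alpha_k^\pm}(d)$; combined with \eqref{N--S-=N+-S+} this forces $\N_{\alpha_k^-}(d)$ and $\N_{\alpha_k^+}(d)$ to have the same number of connected components. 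Since $\N_{\alpha_m^-}(d)$ is connected, connectedness propagates across every wall up to $\N_0(d)$ (this is Corollary \ref{cor:connectednessN-;0<d<g-1}).

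The genuinely remaining case, and the main obstacle, is the boundary value $d=g-1$. Here $\alpha_m=d-g+1=0$, so $\N_0(d)$ is exactly $\N_{\alpha_m}(g-1)$ sitting on the stabilisation wall itself rather than strictly inside the small-parameter region. Theorem \ref{thm:Nalpham} gives connectedness of $\N_{\alpha_m^-}(g-1)$, but to reach $\N_{\alpha_m}(g-1)=\N_0(g-1)$ I must cross the single wall at $\alpha_m$, and Proposition \ref{prop:codimS+-} is stated only for $d<g-1$, so its codimension estimate is not directly available. The resolution, which I would carry out next, is to invoke the sharper input that the codimensions of \emph{both} $\cS_{\alpha_m^\pm}(g-1)$ are nonetheless positive, as follows from Corollaries 3.11 and 3.15 of \cite{gothen-oliveira:2012}; these particular estimates survive at the endpoint $d=g-1$ even though the general Proposition \ref{prop:codimS+-} does not.

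With the positive-codimension statement for $\cS_{\alpha_m^\pm}(g-1)$ in hand, I would finish exactly as in the two outer ranges: the relation \eqref{N--S-=N+-S+} applied at the critical value $\alpha_m$ shows that $\N_{\alpha_m^-}(g-1)$ and $\N_{\alpha_m}(g-1)=\N_0(g-1)$ differ only along subvarieties of positive codimension, hence have the same number of connected components, so $\N_0(g-1)$ inherits connectedness from $\N_{\alpha_m^-}(g-1)$. This mirrors the final paragraph of the proof of \cite[Theorem 5.3]{gothen-oliveira:2012}. Combining the three cases $0<d<g-1$, $d=g-1$, and $g-1<d<2g-2$ covers the full range $0<d<2g-2$ and yields the theorem. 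The delicate point throughout is that the wall-crossing argument requires the codimension of the flip loci to be strictly positive; the only place where this is not already guaranteed by the general Proposition \ref{prop:codimS+-} is the endpoint $d=g-1$, which is precisely why that case must be treated separately using the more refined codimension bounds.
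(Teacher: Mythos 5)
Your proposal is correct and is essentially the paper's own proof: the ranges $g-1<d<2g-2$ and $0<d<g-1$ are exactly Corollaries \ref{cor:connectednessN-;g-1<d<2g-2} and \ref{cor:connectednessN-;0<d<g-1} (Theorem \ref{thm:Nalpham} together with the positive-codimension wall-crossing of Proposition \ref{prop:codimS+-}), and the endpoint $d=g-1$ is handled, as in the paper, by invoking Corollaries 3.11 and 3.15 of \cite{gothen-oliveira:2012} to get positive codimension of $\cS_{\alpha_m^\pm}(g-1)$ and then arguing as in the last paragraph of the proof of \cite[Theorem 5.3]{gothen-oliveira:2012}. The only point to phrase carefully is that \eqref{N--S-=N+-S+} literally compares the spaces on the two \emph{sides} of the wall, so passing from connectedness of $\N_{\alpha_m^-}(g-1)$ to that of $\N_0(g-1)=\N_{\alpha_m}(g-1)$, the space \emph{at} the wall, rests on the argument you cite from \cite{gothen-oliveira:2012} rather than on a direct application of \eqref{N--S-=N+-S+} --- which is precisely how the paper itself proceeds.
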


\section{Conclusion and further remarks}

\subsection{Non-maximal components for $\Sp(4,\R)$ and $\SO_0(2,3)$}

Proposition \ref{prop:boundcomp} and Theorem \ref{prop:N0connected} imply then that we have achieved our objective of calculating the number of connected components of  the moduli space of $\Sp(4,\R)$-Higgs bundles over $X$, with non-maximal and non-zero Toledo invariant:

\begin{theorem}\label{thm:connnectedSp4}
If $0<|d|<2g-2$ then  $\M_d(\Sp(4,\R))$ is connected.
\end{theorem}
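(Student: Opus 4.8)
The plan is to derive Theorem~\ref{thm:connnectedSp4} as a direct consequence of the two results about quadric bundles that precede it, namely Proposition~\ref{prop:boundcomp} (the bound on components via the minima subvariety) and Theorem~\ref{prop:N0connected} (connectedness of $\N_0(d)$ for $0<d<2g-2$), together with the duality $(V,\beta,\gamma)\mapsto(V^*,\gamma,\beta)$ already noted in the excerpt. First I would reduce to the range $0<d<2g-2$: the Milnor--Wood inequality $|d|\leq 2g-2$ shows $\M_d(\Sp(4,\R))$ is empty for $|d|>2g-2$, and the duality isomorphism $\M_d(\Sp(4,\R))\cong\M_{-d}(\Sp(4,\R))$ lets me assume $d>0$ without loss of generality. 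Thus it suffices to prove connectedness for each $d$ with $0<d<2g-2$.

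The core of the argument is the chain of two results. By Proposition~\ref{prop:boundcomp}, for $0<d<2g-2$ the number of connected components of $\M_d(\Sp(4,\R))$ is bounded above by the number of connected components of $\N_0(d)$. This bound is precisely the output of the whole Morse-theoretic strategy: the Hitchin functional $f$ of \eqref{eq:Hitchin functional} is proper and bounded below, so it attains a minimum on every connected component of $\M_d(\Sp(4,\R))$; the subvariety $\N_d(\Sp(4,\R))$ of local minima therefore meets every component, whence the number of components of $\M_d(\Sp(4,\R))$ is at most that of $\N_d(\Sp(4,\R))$. Proposition~\ref{prop:localmin} identifies these minima, for $0<d<2g-2$, with quadric bundles $(V,\gamma)$ (the condition $\beta=0$), and this minima subvariety is exactly the moduli space $\N_0(d)$ of $0$-polystable quadric bundles of degree $d$, with the correct $\alpha=0$ stability that matches Definition~\ref{def:semistabSp(4,R)} restricted to $\beta=0$.

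Now I invoke Theorem~\ref{prop:N0connected}, which asserts that $\N_0(d)$ is connected for every $0<d<2g-2$. Combining this with the component bound from Proposition~\ref{prop:boundcomp} forces $\M_d(\Sp(4,\R))$ to have at most one connected component; since it is non-empty in this range (the non-emptiness follows from the identification of $\N_0(d)$ with a non-empty moduli space of quadric bundles, equivalently from the discussion culminating in Theorem~\ref{thm:Nalpham} and Proposition~\ref{prop:codimS+-}), it has exactly one. Hence $\M_d(\Sp(4,\R))$ is connected for $0<d<2g-2$, and by the duality this gives connectedness for all $0<|d|<2g-2$, which is the statement of Theorem~\ref{thm:connnectedSp4}.

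The genuinely hard work has already been discharged in the earlier results, so at the level of this final theorem there is no serious obstacle; the proof is essentially a bookkeeping assembly. The one point demanding care is that Proposition~\ref{prop:boundcomp} only provides an \emph{upper} bound on the number of components via the minima, so I must be sure that $\N_0(d)$ is genuinely the full subvariety of local minima (not merely a subset of it) and that its connectedness really transfers through the bound; this is exactly the content guaranteed by Proposition~\ref{prop:localmin} together with the standard fact that a proper, bounded-below moment map has its minimum locus meeting every connected component. No new estimate or construction is required beyond citing these facts.
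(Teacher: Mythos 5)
Your proposal is correct and follows exactly the paper's own argument: the paper derives Theorem~\ref{thm:connnectedSp4} precisely by combining Proposition~\ref{prop:boundcomp} with Theorem~\ref{prop:N0connected}, using the duality $(V,\beta,\gamma)\mapsto(V^*,\gamma,\beta)$ to reduce to $0<d<2g-2$. Your additional care about non-emptiness and about $\N_0(d)$ being the full minima locus is a sensible (if implicit in the paper) bit of bookkeeping, not a deviation.
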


This theorem has in fact been proved before \cite{gothen-oliveira:2012}, by O. Garc\'\i a-Prada and I. Mundet i Riera in \cite{garcia-prada-mundet:2004}, using different techniques. More precisely, they do consider quadric bundles, but prove the connectedness directly, i.e., fixing $\alpha=0$ and not implementing  the variation of the parameter.

\paragraph{}
Our method easily generalises for $U$-quadric bundles --- see Remark \ref{rmk:U-quad} --- for any line bundle $U$. In particular, if we consider $LK$-quadric bundles, with $L$ some line bundle of degree $1$, then these are related with $\SO_0(2,3)$-Higgs bundles with non-maximal (and non-zero) Toledo invariant, in the same way $K$-quadric bundles arise in the $\Sp(4,\R)$ case.
Applying Definition \ref{definition of Higgs bundle}, it is easy to check that an $\SO_0(2,3)$-Higgs bundle is defined by a tuple $(L,W,Q_W,\beta,\gamma)$ where $L$ is a line bundle, $(W,Q_W)$ is an orthogonal rank $3$ bundle and the Higgs field is defined by maps $\beta:W\to LK$ and $\gamma:W\to L^{-1}K$. These are topologically classified by the degree $d$ of $L$ (this is the Toledo invariant) and by the second Stiefel-Whitney class $w_2$ of $W$. 
Note that $\SO_0(2,3)$ is isomorphic to $\mathrm{PSp}(4,\R)$. Moreover, an $\SO_0(2,3)$-Higgs bundle lifts to an $\Sp(4,\R)$-Higgs bundle if and only if $d\equiv w_2\ \mathrm{mod}\ 2$. 

The precise same methods that we described for $\Sp(4,\R)$, yield then the following (see \cite[Theorem 6.26]{gothen-oliveira:2012}):

\begin{theorem}\label{thm:connnectedSO(2,3)}
If $0<|d|<2g-2$ and $w_2\in\Z/2$ then $\M_{d,w_2}(\SO_0(2, 3))$ is connected.
\end{theorem}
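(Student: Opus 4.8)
The plan is to reproduce, for $\SO_0(2,3)$, the entire Morse-theoretic reduction carried out above for $\Sp(4,\R)$, now in the $LK$-twisted framework of Remark \ref{rmk:U-quad}. As in the symplectic case, the duality $(L,W,Q_W,\beta,\gamma)\mapsto(L^{-1},W,Q_W,\gamma,\beta)$ interchanges the Toledo invariant $d=\deg(L)$ with $-d$ while fixing $w_2(W)$, so it suffices to establish connectedness for $0<d<2g-2$ and each $w_2\in\Z/2$. First I would introduce the Hitchin functional $f$ on $\M_{d,w_2}(\SO_0(2,3))$, the square of the $L^2$-norm of the Higgs field $(\beta,\gamma)$. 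Being proper and bounded below, $f$ attains its minimum on every connected component, so the number of components of $\M_{d,w_2}(\SO_0(2,3))$ is at most that of its subvariety of local minima, exactly as in Proposition \ref{prop:boundcomp}.

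Next I would identify these minima. Arguing as for Proposition \ref{prop:localmin}---via Frankel's theorem \cite{frankel:1959} that the minima are the fixed points of the Hamiltonian $\U(1)$-action, together with the cohomological criterion of \cite[Corollary 4.15]{bradlow-garcia-prada-gothen:2003}---I expect that for $0<d<2g-2$ the local minima are precisely the Higgs bundles with $\beta=0$, and that $\alpha$-semistability forces $\gamma\neq 0$ just as in the discussion following Proposition \ref{prop:localmin}. Such a minimum is a tuple $(L,W,Q_W,\gamma)$ with $\gamma\colon W\to L^{-1}K$. Using the isomorphism $\SO(3,\C)\cong\PGL(2,\C)$, I would write the rank-$3$ orthogonal bundle as $W\cong\mathrm{End}_0(V)\cong S^2V\otimes(\det V)^{-1}$ for a rank-$2$ bundle $V$, well defined up to twist by a line bundle; since $\deg(V\otimes M)=\deg(V)+2\deg(M)$, the class $\deg(V)\bmod 2$ is intrinsic to $W$ and equals $w_2(W)$. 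Under this identification $\gamma$ becomes a section of $S^2V^*\otimes U$ with $U=\det(V)\otimes L^{-1}K$, that is, an $LK$-type quadric bundle in the sense of Remark \ref{rmk:U-quad}. Thus the subvariety of minima is isomorphic to a moduli space of rank-$2$ $U$-quadric bundles, with the discrete invariant $w_2$ recording the parity of $\deg(V)$.

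Finally I would invoke the $U$-quadric bundle machinery. By Remark \ref{rmk:U-quad}, the whole chain developed above for $K$-quadric bundles adapts to $U=LK$: the non-emptiness bound of Proposition \ref{empty above d_U}, the stabilization at $\alpha_m$ of Proposition \ref{injectivity-stabilization}, the Hitchin-fibration description and connectedness of $\N_{\alpha_m^-}$ in Theorem \ref{thm:Nalpham} (via Proposition \ref{fibres quad and Higgs isomorphic} and the spectral-curve analysis), the positivity of the wall-crossing codimensions in Proposition \ref{prop:codimS+-}, and hence the connectedness of $\N_0$ in Theorem \ref{prop:N0connected}. Running this chain for each fixed $w_2$ gives the connectedness of the relevant $U$-quadric moduli, and the component bound above then forces $\M_{d,w_2}(\SO_0(2,3))$ to be connected.

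The main obstacle is the numerical bookkeeping in the twisted setting. Since $\deg(U)=\deg(V)-d+2g-2\neq 2g-2$, every threshold in the argument---the non-emptiness range, the stabilization value $\alpha_m$, the number and location of the critical values, and most delicately the codimension estimates of Proposition \ref{prop:codimS+-}---must be recomputed, and one must verify that the required strict inequalities persist; the analogue of the technical restriction $d<g-1$ will reappear and must be handled separately at the boundary, exactly as $\N_0(g-1)$ was treated in the $K$-twisted case. Equally important is to keep the correspondence between $w_2$ and the degree parity on the quadric side exact, so that connectedness is proved on the correct topological component rather than merely on the union over both parities.
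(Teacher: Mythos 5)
Your overall route is the same as the paper's, which itself defers all details to \cite[Theorem 6.26]{gothen-oliveira:2012}: use the duality to reduce to $0<d<2g-2$, cut down to the $\beta=0$ locus of minima of the Hitchin functional, identify those minima with rank $2$ quadric bundles twisted by a line bundle $U$ as in Remark \ref{rmk:U-quad}, and rerun the stabilization/wall-crossing machinery for that $U$. The skeleton, and the caveats you flag at the end (recomputed thresholds, codimension estimates, parity bookkeeping), are the right ones.

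There is, however, one step that does not parse as written, and it is exactly the step the paper compresses into ``related \dots in the same way'': your twisting line bundle $U=\det(V)\otimes L^{-1}K$ is not fixed, since both $L$ and $\det(V)$ vary over the moduli space, and $V$ itself is defined only up to $V\mapsto V\otimes M$, which changes $U$ by $M^{2}$. So ``the moduli space of rank-$2$ $U$-quadric bundles'' is not yet a well-posed object, and in particular the minima subvariety cannot be \emph{isomorphic} to it. The repair is to rigidify the lift: fix once and for all a line bundle $L_0$ with $\deg L_0\in\{0,1\}$ and $\deg L_0\equiv w_2-d \pmod 2$, and choose $V$ so that $\det V\cong L\otimes L_0$; then $U=L_0K$ is fixed and the associated quadric bundle has degree $d+\deg L_0$. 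Two consequences must then be recorded. First, the residual ambiguity in the lift is twisting by the $2$-torsion $\Jac(X)[2]$, so the minima subvariety is the quotient of the quadric moduli space by this finite group rather than isomorphic to it --- harmless for your purpose, since the continuous image of a connected space is connected. Second, the stability parameter matching $\alpha=0$ on the Higgs side is shifted by the normalization to $\alpha=\deg(L_0)/2$: in the non-liftable case $d\not\equiv w_2$ you must prove connectedness of $\N_{1/2}(d+1)$ for the twist $U=L_0K$ of odd degree $2g-1$, not of $\N_0$; while in the liftable case one can take $L_0=\cO$, so $U=K$ with $\deg U=2g-2$ (contrary to your blanket assertion that $\deg U\neq 2g-2$) and that case reduces, up to the finite quotient, to the chain already run in the paper. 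A small bonus of the corrected bookkeeping: since $\deg(L_0K)$ is odd in the non-liftable case, the delicate boundary phenomenon analogous to $\N_0(g-1)$ cannot occur there. With these corrections your argument is precisely the paper's argument.
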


Recalling that non-abelian Hodge theory implies that the moduli space of $G$-Higgs bundles over $X$ is homeomorphic to the space of conjugacy classes of reductive representations of $\pi_1(X)$ in $G$ (cf. Remark \ref{rmk:NonabelianHodge}), we conclude the both Theorems \ref{thm:connnectedSp4} and \ref{thm:connnectedSO(2,3)} have their counterparts on the representations side (see \cite{garcia-prada-mundet:2004,gothen-oliveira:2012} for the detailed statements).

\subsection{Some different directions}

\subsubsection{Torelli theorem}
Our method to analyse the components of the moduli spaces $\N_\alpha(d)$ of $\alpha$-semistable quadric bundles of degree $d$ was to start with the study in the lowest extreme of $\alpha$, that is the study of $\N_{\alpha_m^-}(d)$. One can ask what happens in the highest possible extreme, namely $\alpha=\alpha_M=d/2$. Since this is a critical value, take a slight lower value, $\alpha_M^-=d/2-\epsilon$, for a small $\epsilon>0$. Here, different phenomena arise.

Briefly, it is easy to check that if $(V,\gamma)$ is $\alpha_M^-$-semistable, then $V$ is itself semistable as a rank $2$ vector bundle. If $M(d)$ denotes the moduli space of polystable rank $2$ degree $d$ vector bundles on $X$, this yields a forgetful map $\pi:\N_{\alpha_M^-}(d)\to M(d)$. If $d=2g-2$, this map is an embedding because $\N_{\alpha_M^-}(d)$ is, as we saw, the moduli of orthogonal vector bundles and thus follows from \cite{serman:2008}. If $g-1\leq d< 2g-2$, the determination of the image of $\pi$ is a Brill-Noether problem. If $d<g-1$, $\pi$ is surjective and if, further, $d<0$, the map $\pi$, suitably restricted, is a projective bundle over the stable locus $M^s(d)\subset M(d)$. This is explained in Proposition 3.13 of \cite{gothen-oliveira:2012}.

So assume $d<0$, and from now on let us just consider quadric bundles $(V,\gamma)$ where the determinant of $V$ is fixed to be some line bundle $\Lambda$ of degree $d$. Let $\N_\alpha(\Lambda)\subset\N_\alpha(d)$ and $M(\Lambda)\subset M(d)$ denote the corresponding obvious moduli spaces. Using the projective bundle $\pi$ onto the stable locus of $M(\Lambda)$ and through a detailed analysis of the the smooth locus $\N^{sm}_\alpha(\Lambda)\subset\N_\alpha(\Lambda)$, we were able to obtain some geometric and topological results on $\N_\alpha(d)$. This procedure is taken in detail in \cite{oliveira:2015} again in the more general setting of $U$-quadric bundles.

For instance we proved that $\N_\alpha(\Lambda)$ is irreducible and $\N^{sm}_\alpha(\Lambda)$ is simply-connected --- see Corollaries 4.3 and 4.4 of \cite{oliveira:2015}. The irreducibility was already known from \cite{gomez-sols:2000}, using different methods.

Under some slight conditions on the genus of $X$, we calculated the torsion-free part of the first three integral cohomology groups of the smooth locus $\N^{sm}_\alpha(\Lambda)\subset\N_\alpha(\Lambda)$ for any $\alpha$. In particular \cite[Proposition 5.6]{oliveira:2015} says that $H^3(\N^{sm}_\alpha(\Lambda),\Z)$ is isomorphic to $H^1(X,\Z)$. This fact, together with the assumption that the genus of $X$ is at least $5$, and after properly defining a polarisation on $H^3(\N^{sm}_\alpha(\Lambda),\Z)$ compatible with the one on $H^1(X,\Z)$, allowed us to prove that a Torelli type theorem holds for $\N_\alpha(\Lambda)$. From this it follows that the same is also true for the non-fixed determinant moduli. To emphasise now the base curve, write $\N_{X,\alpha}(\Lambda)$ for the moduli space of $\alpha$-polystable quadric bundles of rank two with fixed determinant $\Lambda$ on $X$. Let $\N_{X,\alpha}(d)$ be the same thing but just fixing the degree and not the determinant.

\begin{theorem}[\cite{oliveira:2015}]
Let $X$ and $X'$ be smooth projective curves of genus $g,g'\geq 5$, $\Lambda$ and $\Lambda'$ line bundles of degree $d<0$ and $d'<0$ on $X$ and $X'$, respectively. If $\N_{X,\alpha}(\Lambda)\cong \N_{X',\alpha}(\Lambda')$ then $X\cong X'$. 
The same holds for $\N_{X,\alpha}(d)$ and $\N_{X',\alpha}(d')$.
\end{theorem}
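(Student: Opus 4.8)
The plan is to run the standard Torelli strategy: reconstruct the principally polarized Hodge structure $H^1(X,\Z)$ from the variety $\N_{X,\alpha}(\Lambda)$ and then invoke the classical Torelli theorem for curves. First I would observe that any isomorphism $\Phi\colon \N_{X,\alpha}(\Lambda)\xrightarrow{\cong}\N_{X',\alpha}(\Lambda')$ of varieties must carry the smooth locus to the smooth locus, since smoothness is intrinsic; thus $\Phi$ restricts to a biregular isomorphism $\N^{sm}_{X,\alpha}(\Lambda)\cong\N^{sm}_{X',\alpha}(\Lambda')$ and induces isomorphisms on all integral cohomology groups that are compatible with the Hodge structures. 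Applying this in degree $3$ and using the computation $H^3(\N^{sm}_\alpha(\Lambda),\Z)\cong H^1(X,\Z)$ of \cite[Proposition 5.6]{oliveira:2015} (valid because $g\geq 5$), together with its analogue for $X'$, I obtain an isomorphism of Hodge structures $H^1(X,\Z)\cong H^1(X',\Z)$.

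The crucial --- and hardest --- step is to upgrade this to an isomorphism of \emph{polarized} Hodge structures. Here I would use the polarization placed on $H^3(\N^{sm}_\alpha(\Lambda),\Z)$ in \cite{oliveira:2015}, checking that it is intrinsic to the variety: it is built from the cup product together with hard Lefschetz applied to an ample class, and a biregular isomorphism sends ample classes to ample classes, so $\Phi^*$ preserves this polarization up to a positive scalar. The computation of both the cohomology group and its polarization rests on the projective bundle structure $\pi\colon\N_{\alpha_M^-}(\Lambda)\to M^s(\Lambda)$ over the stable locus, which is exactly where the hypothesis $d<0$ enters. Granting the compatibility of this polarization with the principal polarization (intersection form) on $H^1(X,\Z)$, the induced map $\Phi^*\colon H^1(X',\Z)\to H^1(X,\Z)$ becomes an isomorphism of principally polarized Hodge structures, hence an isomorphism of the principally polarized Jacobians $\Jac(X)\cong\Jac(X')$. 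The Torelli theorem for curves then yields $X\cong X'$.

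For the non--fixed determinant moduli $\N_{X,\alpha}(d)$ I would reduce to the previous case through the determinant morphism $(V,\gamma)\mapsto\det V$, which maps $\N_{X,\alpha}(d)\to\Jac^d(X)$ with fibres isomorphic to the fixed--determinant spaces $\N_{X,\alpha}(\Lambda)$. Since the smooth locus of each fibre is simply connected (Corollary 4.4 of \cite{oliveira:2015}), this morphism realises $\Jac^d(X)$ as a torsor under the Albanese variety of $\N_{X,\alpha}(d)$; being intrinsic, the Albanese is preserved by any isomorphism $\N_{X,\alpha}(d)\cong\N_{X',\alpha}(d')$, so one again recovers $\Jac(X)\cong\Jac(X')$ as principally polarized abelian varieties and concludes by Torelli.

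I expect the main obstacle throughout to be the polarization step: verifying that the form placed on $H^3(\N^{sm}_\alpha(\Lambda),\Z)$ is genuinely canonical (up to a positive scalar) and that it matches the intersection form on $H^1(X,\Z)$ under the identification of \cite[Proposition 5.6]{oliveira:2015}. Without this, $\Phi^*$ is only an isomorphism of \emph{abstract} Hodge structures, which is insufficient --- Torelli requires the polarization, and recovering it is precisely what forces the genus restriction $g\geq 5$ and the sign condition $d<0$ that guarantee the projective bundle description underlying the whole cohomological computation.
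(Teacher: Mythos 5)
Your skeleton coincides with the strategy of \cite{oliveira:2015} as summarised in the paper: compute $H^3(\N^{sm}_\alpha(\Lambda),\Z)\cong H^1(X,\Z)$, upgrade to an isomorphism of \emph{polarized} Hodge structures, invoke classical Torelli, and reduce the non-fixed-determinant case via the determinant fibration. But the two steps you flag as "crucial" are not carried out, and the sketches you offer for them would fail as stated. For the polarization, you propose "cup product together with hard Lefschetz applied to an ample class" on $\N^{sm}_\alpha(\Lambda)$, and then explicitly \emph{grant} its compatibility with the intersection form on $H^1(X,\Z)$. The problem is that $\N^{sm}_\alpha(\Lambda)$ is only quasi-projective --- it is the smooth locus of a singular projective variety. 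On a non-compact variety there is no Poincar\'e duality, no fundamental class to evaluate against, and no hard Lefschetz theorem (the top-degree cohomology of a connected non-compact $2n$-manifold even vanishes), so "cup with powers of an ample class and pair" does not define a bilinear form on $H^3$, much less a polarization; hard Lefschetz on the projective variety $\N_\alpha(\Lambda)$ itself would concern its intersection cohomology, not the ordinary cohomology of the smooth locus computed in \cite[Proposition 5.6]{oliveira:2015}. This is exactly why the paper speaks of "properly defining" the polarization: in \cite{oliveira:2015} it is constructed from the explicit geometry (the projective-bundle structure of part of $\N^{sm}_\alpha(\Lambda)$ over the stable locus $M^s(\Lambda)$, which is where $d<0$ enters, together with a Mumford--Newstead type description), and proving that this form is preserved by an arbitrary isomorphism of the moduli spaces and matches the theta polarization on $H^1(X,\Z)$ is the actual content of the theorem. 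By granting it, you have assumed the only genuinely hard step.

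The non-fixed-determinant reduction has a second, independent gap. Even granting that the determinant map realises $\Jac^d(X)$ as (a torsor under) the Albanese of $\N_{X,\alpha}(d)$, intrinsicness of the Albanese only yields $\Jac(X)\cong\Jac(X')$ as \emph{abstract} abelian varieties: the Albanese carries no canonical principal polarization, and unpolarized Jacobians do not determine the curve (there exist non-isomorphic curves whose Jacobians are isomorphic as abelian varieties), so your final "concludes by Torelli" does not follow --- this is the same polarization trap as above, resurfacing. The correct use of your fibration observation is different: an isomorphism $\N_{X,\alpha}(d)\cong\N_{X',\alpha}(d')$ commutes with the induced maps on Albanese varieties, hence carries fibres of the determinant map to fibres, producing an isomorphism $\N_{X,\alpha}(\Lambda)\cong\N_{X',\alpha}(\Lambda')$ for suitable $\Lambda$, $\Lambda'$; one then applies the fixed-determinant theorem, polarization included. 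That reduction, rather than recovering the Jacobian as an unpolarized Albanese, is what makes the second assertion of the theorem follow from the first.
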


In other words, the isomorphism class of the curve $X$ is determined by the one of the projective variety $\N_{X,\alpha}(\Lambda)$.

\subsubsection{Higher ranks}

One natural question is to wonder if the procedure we described here can be generalised to ranks higher than $2$. First, Proposition \ref{prop:boundcomp} is true for any rank (it is even true for any real reductive Lie group). Proposition \ref{prop:localmin} also generalises in a straightforward way for $\Sp(2n,\R)$ for $n>2$, so we are again lead to the study of higher rank quadric bundles. The technical problems start here because the $\alpha$-semistability condition can be much more complicated in higher rank, involving not only subbundles but filtrations (see \cite{gomez-sols:2000} and \cite{giudice-pustetto:2014}). One consequence is that the study of $\N_{\alpha_m^-}(d)$ and mainly of $\cS_{\alpha_k^\pm}(d)$ should become much more complicated.



\footnotesize{\textbf{Acknowledgments:}

Author partially supported by CMUP (UID/MAT/00144/2013), by the Projects EXCL/MAT-GEO/0222/2012 and PTDC/MAT­GEO/2823/2014 and also by the Post-Doctoral fellowship SFRH/BPD/100996/2014. These are funded by FCT (Portugal) with national (MEC) and European structural funds (FEDER), under the partnership agreement PT2020. Support from U.S. National Science Foundation grants DMS 1107452, 1107263, 1107367 ``RNMS: GEometric structures And Representation varieties'' (the GEAR Network) is also acknowledged.

The author would like to thank Martin Schlichenmaier for the invitation to write this article, partially based on the talk given at the International Conference on Geometry and Quantization (GEOQUANT 2015), held at the ICMAT (Madrid). Acknowledgments also due to the organisers of the Conference.

The main content of this article is based on joint work with Peter Gothen to whom the author is greatly indebted.}
\vspace{.5cm}

\normalsize

\noindent
      \textbf{Andr\'e Oliveira} \\
      Centro de Matem\'atica da Universidade do Porto, CMUP\\
      Faculdade de Ci\^encias, Universidade do Porto\\
      Rua do Campo Alegre 687, 4169-007 Porto, Portugal\\ 
      email: andre.oliveira@fc.up.pt

\vspace{.2cm}
\noindent
\textit{On leave from:}\\
 Departamento de Matem\'atica, Universidade de Tr\'as-os-Montes e Alto Douro, UTAD \\
Quinta dos Prados, 5000-911 Vila Real, Portugal\\ 
email: agoliv@utad.pt


\label{lastpage}
\end{document}